\newtheorem{theorem}{Theorem}[section]
\newtheorem{prop}{Proposition}[section]
\newtheorem{lemma}{Lemma}[section]
\newtheorem{remark}{\textbf{Remark}}[section]
\def\r{\mathbb R}
\def\ss{\mathbb{S}}
\def\tr{\mathrm{tr}}
\def\p{\partial}
\def\a{\alpha}
\def\b{\beta}
\def\g{\gamma}
\def\k{\kappa}
\def\l{\lambda}
\def\L{\Lambda}
\def\s{\sigma}
\def\beq{\begin{eqnarray}}
\def\eeq{\end{eqnarray}}
\def\p{\partial}
\def\<{\langle}
\def\>{\rangle}
\def\n{\nabla}
\def\w{\mathcal{W}}
\begin{document}

\title[Inverse anisotropic mean curvature flow]{Inverse anisotropic mean curvature flow and a Minkowski type inequality}

\author{Chao Xia}\address{School of Mathematical Sciences, Xiamen University, 361005, Xiamen, China}
 \email{chaoxia@xmu.edu.cn}
\thanks{Research of CX is  supported by NSFC (Grant No. 11501480), the Fundamental Research Funds for the Central Universities (Grant No. 20720150012) and  the Natural Science Foundation of Fujian Province of China (Grant No. 2017J06003). }

\begin{abstract}In this paper, we show that the inverse anisotropic mean curvature flow in $\r^{n+1}$, initiating from a star-shaped, strictly $F$-mean convex hypersurface, 
exists for all time and after rescaling the flow converges exponentially  fast to a rescaled Wulff shape in the $C^\infty$ topology. As an application, we prove a Minkowski type inequality for star-shaped, $F$-mean convex hypersurfaces.
\end{abstract}

\maketitle

\section{Introduction}

Let $X(\cdot, t): M\times [0,T)\to \r^{n+1}$ be a family of smooth closed hypersurfaces in $\r^{n+1}$ satisfying 
\begin{eqnarray}\label{imcf}
\frac{\p}{\p t} X(x,t)=\frac{1}{H(x,t)}\nu(x,t),
\end{eqnarray}
where $H$ is the mean curvature function and $\nu$ is the outward unit normal. \eqref{imcf} is the so-called inverse mean curvature flow (IMCF). Gerhardt \cite{Ge1} and Urbas \cite{Ur} independently showed that, starting from a smooth closed, star-shaped and mean-convex  hypersurface, the flow \eqref{imcf} has a unique smooth solution for all time and the rescaled hypersurfaces $\tilde{X}(\cdot,t)=e^{-\frac1nt}X(\cdot,t)$ converges exponentially fast to a sphere.  Huisken-Ilmanen \cite{HI1, HI2} also defined a notion of weak solution for \eqref{imcf} and proved the higher regularity properties.

Besides the behavior of the flow \eqref{imcf} has been investigated in different ambient spaces \cite{Di, Ge2, Ge3, Sch}, the IMCF has been found to be a powerful tool to prove geometric inequalities. For example, Guan-Li \cite{GL} used the fully nonlinear version of the IMCF to prove the classical Alexandrov-Fenchel inequality for the quermassintegrals for star-shaped hypersurfaces. Huisken-Ilmanen \cite{HI1} used IMCF in the asymptotically flat manifolds to prove the Penrose inequality. More recently, Brendle-Hung-Wang \cite{BHW} used the IMCF in the Anti-de Sitter Schwarzschild manifolds to prove a Minkowski type inequality, which was applied to prove a Gibbons-Penrose's inequality in Schwarzschild spacetime \cite{BW}.

In this paper, we investigate the following inverse anisotropic mean curvature flow (IAMCF) in $\r^{n+1}$:
\begin{eqnarray}\label{iamcf}
\frac{\p}{\p t} X(\cdot,t)=\frac{1}{H_F(x,t)}\nu_F(x,t),
\end{eqnarray}
where $H_F$ is the anisotropic mean curvature function and $\nu_F$ is the outward anisotropic unit normal. Here we just mention that the anisotropy is determined by a given smooth closed strictly convex hypersurface $\w\subset \r^{n+1}$, which we call ``Wulff shape". $F\in C^\infty(\ss^n)$ indicating the  support function of $\w$ satisfies that the spherical Hessian is positive definite.  Geometrically, anisotropy is an alternative way of speaking about the relative geometry or the Minkowski geometry, which was intensively studied by Minkowski, Fenchel, etc., see e.g. \cite{BF} and the references therein. $\w$ was named as an ``Eichk\"orper" by Minkowski in the relative geometry. For the exact definition of $H_F$ and $\nu_F$ we refer to Sections \ref{sec2} and \ref{sec3}.
 
For an anisotropic flow,  the speed function depends not only on the usual curvature function of the evolved hypersurface but also its normal vector. For the anisotropic mean curvature flow, there are works concerning with weak solutions and their regularity issue, as well as its numerical analysis, see \cite{CGG, Gi} and the references therein.  
Simultaneously, much attention has been paid to the anisotropic curve flow  in $\r^{2}$ in the last decades after Angenent and Gurtin's modeling the motion of the interface with external force, see for example \cite{AG1, AG2, Gu} and the reference therein. For free external force, the flow has a natural interpretation as curve-shortening problem in Minkowski geometry and a complete picture has been captured by Gage \cite{Gage}, Gage-Li \cite{GaLi} and Chou-Zhu \cite{CZ1, CZ2}. General powers of anisotropic curve flows have been investigated by Andrews \cite{An2}. 

Comparatively, there is less work on higher dimensional anisotropic flows concerning about detailed convergence.  To the best of our knowledge, the only results in this direction are about the anisotropic Gauss curvature type flow  and the volume preserving anisotropic mean curvature flow considered by Andrews \cite{An3, An4}. In comparison to the isotropic flow,  it is harder to get the a priori estimate due to the anisotropy from the PDE point of view, and the behavior of geometric quantities in the anisotropic case is  worse from the geometric point of view.

\

Let us return to the IAMCF \eqref{iamcf}. The picture for the curve case is clear for strictly convex curves by the work of Andrews \cite{An2}. Among others, he proved that the flow \eqref{iamcf} in $\r^{2}$ exists for all time and converges to $\w$ at infinite time. 

The first aim of this paper is about the existence and convergence of higher dimensional IAMCF. We will show  the anisotropic version of Gerhardt and Urbas' result for star-shaped and $F$-mean convex hypersurface. A hypersurface $M\subset\r^{n+1}$ is called strictly F-mean convex if the anisotropic mean curvature $H_F>0$.
Our main result is the following

\begin{theorem}\label{thm}
Let $\w\subset \r^{n+1},$ $n\geq 2,$  be a given smooth closed strictly convex hypersurface containing the origin whose support function is $F:\ss^n\to\r$.  Let  $X_0: M^n\to \r^{n+1}$ be a smooth closed hypersurface which is star-shaped with respect to the origin and strictly $F$-mean convex. Then there exists a unique, smooth solution  $X(\cdot,t)$  to \eqref{iamcf} 
for $t\in [0,\infty)$ such that $X(\cdot,0)=X_0$. Moreover, the rescaled hypersufaces $e^{-\frac{t}{n}}X(\cdot,t)$ converge exponentially fast  to $\a_0 \w$ in the $C^\infty$ topology, where $\a_0=\int_M F(\nu(X_0))d\mu_{X_0}$ is the anisotropic area of $X_0$.\end{theorem}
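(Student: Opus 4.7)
The plan is to adapt the Gerhardt--Urbas strategy for the isotropic IMCF: reduce \eqref{iamcf} to a scalar parabolic equation for the radial graph function, prove a priori $C^0$, $C^1$ and curvature estimates, and then analyze the rescaled flow. Since $X_0$ is star-shaped one can write the evolving hypersurface as $X(x,t)=r(x,t)\,x$ for $x\in\ss^n$ and set $\varphi=\log r$. Substituting into \eqref{iamcf} and using the formulas for $H_F$ and $\nu_F$ derived in Section \ref{sec3}, the flow reduces to a scalar equation
\[
\p_t \varphi = G(x,\n\varphi,\n^2\varphi),
\]
whose linearization is elliptic precisely when $H_F>0$. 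Short-time existence of a smooth solution on some maximal interval $[0,T_{\max})$ then follows from standard parabolic theory.

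The heart of the argument is to produce a priori estimates strong enough to force $T_{\max}=\infty$. For the $C^0$ bound I would use the avoidance principle against the explicit homothetic barriers $t\mapsto \a(t)\w$: on $\a\w$ one has $H_F=n/\a$ and the anisotropic normal is radial, so these Wulff shapes evolve by \eqref{iamcf} with $\dot\a=\a/n$. Sandwiching $X_0$ between two such barriers yields $c_0 e^{t/n}\le r\le C_0 e^{t/n}$, so $\varphi-t/n$ is uniformly bounded. For the $C^1$ estimate I would apply the maximum principle to an angle-type function such as $\langle X,\nu\rangle/(r\,F(\nu))$, which should both bound the gradient and preserve star-shapedness. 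To keep the speed $1/H_F$ bounded (i.e.\ to prevent $H_F$ from degenerating) a Tso-type maximum principle on a quantity combining $H_F$ with the anisotropic support function should work. Once $H_F\ge c>0$ and $|\n\varphi|$ is bounded, the equation becomes uniformly parabolic, and Krylov--Safonov regularity followed by a Schauder bootstrap yields uniform $C^\infty$ estimates and hence long-time existence.

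For convergence, I would pass to the rescaled immersion $\widetilde X(\cdot,t)=e^{-t/n}X(\cdot,t)$. The Euler identity $\langle\nu_F,\nu\rangle=F(\nu)$, combined with the first-variation formula for anisotropic area, gives
\[
\frac{d}{dt}\int_{M_t} F(\nu)\,d\mu = \int_{M_t} F(\nu)\,d\mu,
\]
so the rescaled anisotropic area is conserved. The uniform $C^\infty$ estimates give compactness, and any smooth subsequential limit of $\widetilde X$ is a self-similar solution of \eqref{iamcf} and hence a homothetic Wulff shape; conservation of anisotropic area then fixes the homothetic factor so that the limit is $\a_0\w$. The hardest step I anticipate is the exponential rate: linearizing the rescaled equation around $\a_0\w$ and establishing a positive spectral gap for the resulting anisotropic Jacobi-type operator on $\w$ requires a direct spectral analysis that cannot exploit the rotational symmetry used in the isotropic proof, and care is needed to quotient out the translation zero modes that correspond to recentering the limit Wulff shape.
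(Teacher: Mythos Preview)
Your outline tracks the Gerhardt--Urbas template, and the $C^0$ barrier argument and the Tso-type bound on $H_F\hat u$ are exactly what the paper does. But there is a genuine gap at the $C^2$ step. You write that once $H_F$ is bounded below and $|\nabla\varphi|$ is bounded, ``Krylov--Safonov regularity followed by a Schauder bootstrap yields uniform $C^\infty$ estimates.'' Krylov--Safonov only gives H\"older continuity of the solution; to run Evans--Krylov (or any Schauder bootstrap) on a fully nonlinear equation with gradient dependence you still need an a priori $C^2$ bound. In the isotropic case Gerhardt and Urbas obtain this by a maximum-principle argument on the largest principal curvature. In the anisotropic case that argument breaks: the evolution of $\hat h_i^{\,j}$ picks up the extra terms $\hat g^{jk}A_{pik}\hat\nabla^p f$ coming from the non-Codazzi nature of $\hat h$ (Lemma~\ref{lem2-1}), and after commuting derivatives the anisotropic Gauss--Codazzi equations \eqref{Gausseq}--\eqref{Codazzi} produce uncontrolled $Q$- and $A$-terms that do not have a sign. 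The paper bypasses this entirely by exploiting the \emph{quasilinear} structure of $H_F$: viewing $\tilde H_F=\text{(bounded)}$ as a quasilinear elliptic equation for $\tilde\gamma$ gives $C^{1,\alpha}$ via De Giorgi--Nash (Theorem~13.6 in \cite{GT}); then the equation \eqref{fu0} for $P=H_F\hat u$ is in divergence form with respect to $d\mu_F$ (this uses Lemma~\ref{volume}), so the parabolic De Giorgi--Nash theory of \cite{LSU} gives $P\in\tilde C^\gamma$, hence $\tilde H_F\in C^\beta$; feeding this back into \eqref{ellip} and applying elliptic Schauder yields $C^{2,\alpha}$. Your proposal contains no substitute for this step.

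A smaller point: your $C^1$ test function $\langle X,\nu\rangle/(rF(\nu))$ is essentially $\hat u/\rho$, which is the right object, but to derive a usable evolution equation for it you need Andrews' metric $G$ and the associated $\hat g$, $\hat\nabla$, $Q$, $A$ calculus of Section~\ref{sec3}; the paper stresses that the naive evolution of $|\nabla^{\mathbb S}\gamma|^2$ is intractable, and the clean inequality in \eqref{scale u} relies on the identities $G(\nu_F)(\nu_F,\cdot)=0$ and $Q(\nu_F)(\nu_F,\cdot,\cdot)=0$. On convergence, your compactness-plus-classification route is a reasonable alternative, but the paper avoids both the self-similar classification and the spectral-gap linearization you flag as hard: it uses the monotonicity \eqref{vv} of $\int \tilde H_F\,d\tilde\mu_F$ to force anisotropic umbilicity, and then two Poincar\'e-inequality energy arguments (one for $P$, one for $\tilde{\hat u}$ after reparametrizing by the inverse anisotropic Gauss map) to get the exponential rate. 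No translation modes need to be quotiented out: the Wulff barriers are centered at the origin and the limit is $\alpha_0\w$ with no recentering.
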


The inverse anisotropic curvature flow has been considered by Ben Andrews in his dissertation \cite{An1}. There he showed up to $C^1$ estimate under certain conditions on the speed function, which excludes the IAMCF. 

Due to the anisotropy, most of the classical approach to prove the a priori estimates by Gerhardt and Urbas fails. Particularly, when we write the flow function as a scalar function of the graph function $\rho$ over $\ss^n$, the evolution equation for   $|\nabla^\ss \rho|^2$ does not behave well. Also, the evolution equation for the largest principal curvature is quite bad. 

To overcome these difficulties, we introduce a new Riemannian metric $\hat{g}$ on $M$, induced from a new Riemannian metric $G$ (See Section \ref{sec3}) on $\r^{n+1}$. This is inspired by a previous work of Andrews \cite{An4}. This is the key point of this paper. The new metric easies a lot  the $C^1$ estimate, but not for the $C^2$ estimate. We utilize the special structure of the anisotropic mean curvature and apply the classical theory from quasilinear elliptic and parabolic PDEs to our flow equation to get directly the $C^{2,\a}$ estimate.

To prove the convergence, we prove two quantities are monotone along the flow. By integration of these two quantities among all time, we find that  the limiting hypersurface must be anisotropically umbilical and has $F$ as its support function, which yields our convergence result.

\

The second aim of this paper is  to prove a  geometric inequality by using the IAMCF. This is also a motivation for us to consider the IAMCF.

The anisotropic curvature integrals have an direct relation with some special mixed volumes in the theory of convex bodies. An excellent book for the theory of convex bodies is by Schneider \cite{Sc}.
For any two convex bodies $K$ and $L$ in $\r^{n+1}$, the Minkowski sum is defined by $$(1-t)K+tL:=\{(1-t)x+ ty |x\in K, y\in L, t\in [0,1]\}.$$ Minkowski proved that the volume of $(1-t)K+tL$ is a polynomial in $t$, the coefficients of which are some mixed volumes. Precisely,
$$\hbox{Vol}\left((1-t)K+tL\right)
= \sum_{k=0}^{n+1}\binom{n+1}{k}(1-t)^{n+1-k}t^kV_{(k)}(K,L).$$
Especially, $V_{(0)}(K,L)=\hbox{Vol}(K)$ and $V_{(n+1)}(K,L)=\hbox{Vol}(L)$.

The most general Alexandrov-Fenchel inequality (see e.g.\cite{Sc}, Section 7.3, (7.54)) implies the following Minkowski type inequality (see e.g.\cite{Sc}, Section 7.3, (7.63)):
\begin{eqnarray*}
V_{(j)}(K,L)^{k-i}\geq V_{(i)}(K,L)^{k-j}V_{(k)}(K,L)^{j-i}, \quad \hbox{ for }0\leq i<j<k\leq n+1.
\end{eqnarray*}
In particular, for $k=n+1$, 
\begin{eqnarray}\label{AF0}
V_{(j)}(K,L)^{n+1-i}\geq V_{(i)}(K,L)^{n+1-j}\hbox{Vol}(L)^{j-i}, \quad \hbox{ for }0\leq i<j<n+1.
\end{eqnarray}

Assume that $\p L=\w$ is a smooth, strictly convex hypersurface and  $\p K$ is of $C^2$. 
We can interpret $V_{(i)}(K,L)$ in terms of the anisotropic curvature integrals (see e.g.\cite{BF}, 38 (13)):
\begin{eqnarray}\label{KL}
V_{(i)}(K,L)=\frac{1}{(n+1)\binom{n}{i-1}}\int_{\p K}\sigma_{i-1}(\k^F)  F(\nu)d\mu_{\p K}, \quad i=1,\cdots,n,
\end{eqnarray}
where $\sigma_i(\k^F)$ is the $i$-th elementary symmetric function on the anisotropic principal curvature $\k^F$.
When $L=B,$ the unit ball, 
$$V_{(i)}(K,B)=\frac{1}{(n+1)\binom{n}{i-1}}\int_{\p K}\sigma_{i-1}(\k)  d\mu_{\p K}, \quad i=1,\cdots,n,$$ where $\k$ is the usual principal curvature.
Therefore,  it makes sense to define $V_{(i)}(K,L)$ through \eqref{KL} for non-convex $K$ with $C^2$ boundary.


It is interesting to establish the Alexandrov-Fenchel and the Minkowski type inequalities for non-convex domains. Several works  in this direction have appeared, see for example \cite{Tr, GMTZ, GL, CW}.
In \cite{GL}, Guan-Li used Gerhardt and Urbas' result on the inverse curvature flow to show \eqref{AF0} holds true when $\w=\ss^n$ $(L=B)$ and $\p K$ is star-shaped and $k$-convex. 
In the same spirit of \cite{GL}, using the result on the IAMCF, Theorem \ref{thm}, we are able to show a special Minkowski type inequality,  \eqref{AF0} for $i=1$ and $j=2$, when $\p K$ is star-shaped and $F$-mean convex.

\begin{theorem}\label{AF}
Let $\w\subset\r^{n+1},$ $n\geq 2,$ be a smooth closed strictly convex hypersurface with support function $F$. Let $L$ be the enclosed domain by $\w$.
For any smooth star-shaped, $F$-mean convex ($H_F\geq 0$) hypersurface $M\subset \r^{n+1}$ which encloses $K$, we have
\begin{eqnarray}\label{AFI}
V_{(2)}(K,L)^n \geq V_{(1)}(K,L) ^{{n-1}} \hbox{Vol}(L),
\end{eqnarray}
for $V_{(i)}(K,L)$ defined by \eqref{KL}.
Equality in \eqref{AFI} holds if and only if $M$ is a rescaling and translation of $\w$.
\end{theorem}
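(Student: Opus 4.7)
The plan is to follow the inverse-flow scheme of Guan--Li \cite{GL} adapted to the anisotropic setting: run \eqref{iamcf} from $M=\p K$, find a scale-invariant monotone quantity, and evaluate its limit via the convergence supplied by Theorem \ref{thm}. Let $K_t$ denote the domain enclosed by $M_t=X(M,t)$ and set
\[
\mathcal{A}(t):=\int_{M_t}F(\nu)\,d\mu_t=(n+1)V_{(1)}(K_t,L),\qquad \mathcal{H}(t):=\int_{M_t}H_F\,F(\nu)\,d\mu_t=n(n+1)V_{(2)}(K_t,L),
\]
and consider $Q(t):=\mathcal{H}(t)^{n}/\mathcal{A}(t)^{n-1}$.

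Two evolution identities drive the argument. First, since the normal component of the IAMCF velocity is $\<\nu_F,\nu\>/H_F=F(\nu)/H_F$ and the first variation of the anisotropic area along a normal speed $\beta\nu$ is $\int\beta H_F\,d\mu$, one obtains $\mathcal{A}'(t)=\mathcal{A}(t)$, so $\mathcal{A}(t)=e^t\mathcal{A}(0)$. Second, and this is the main analytic step, I want to show
\[
\mathcal{H}'(t)\leq \tfrac{n-1}{n}\mathcal{H}(t),
\]
with equality iff $M_t$ is anisotropically umbilical. To get this I would compute $\mathcal{H}'(t)$ from the evolution of $H_F$ and $F(\nu)$ under \eqref{iamcf}, use an anisotropic Minkowski-type identity relating $\int H_F F(\nu)\,d\mu$ to $\int\sigma_2(\k^F)\<X,\nu_F\>\,d\mu$, and close the estimate by Newton's inequality $\sigma_1(\k^F)^2\geq \tfrac{2n}{n-1}\sigma_2(\k^F)$ combined with a Cauchy--Schwarz step. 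Together these force $Q'(t)\leq 0$.

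By Theorem \ref{thm}, $e^{-t/n}X(\cdot,t)\to\a_0\w$ in $C^\infty$. Since $\w$ has all anisotropic principal curvatures equal to $1$ (so $H_F\equiv n$ on $\w$), one computes $e^{-t}\mathcal{A}(t)\to\a_0^n(n+1)\hbox{Vol}(L)$ and $e^{-(n-1)t/n}\mathcal{H}(t)\to n\a_0^{n-1}(n+1)\hbox{Vol}(L)$, hence $\lim_{t\to\infty}Q(t)=n^n(n+1)\hbox{Vol}(L)$. Cancelling numerical factors in $Q(0)\geq\lim Q(t)$ gives exactly \eqref{AFI}. For the equality case, if equality holds initially then $Q$ is constant along the flow, which forces equality in Newton's inequality pointwise and at all times, so every $M_t$ is anisotropically umbilical; a closed anisotropically umbilical hypersurface is a translated rescaling of $\w$, settling the rigidity.

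One loose end is that the hypothesis allows $H_F\geq 0$ whereas \eqref{iamcf} requires strict $F$-mean convexity; this is handled by approximating $M$ by strictly $F$-mean convex, star-shaped hypersurfaces (e.g.\ by a small outward anisotropic perturbation), applying the inequality to each, and passing to the limit using continuity of $V_{(1)}$ and $V_{(2)}$ in $C^2$. The \emph{hard step} is the monotonicity $Q'\leq 0$: establishing the correct anisotropic Minkowski identity and recasting $\mathcal{H}'$ so that Newton's inequality closes the estimate is genuinely more delicate than in the isotropic case, because $\nu_F$ depends nonlinearly on $\nu$ and the anisotropic Weingarten map does not diagonalize in the induced metric, so one naturally invokes the adapted metric machinery introduced for Theorem \ref{thm}.
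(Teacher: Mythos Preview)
Your overall scheme is exactly the one the paper uses: run the flow, show $\mathcal{A}'=\mathcal{A}$ and $\mathcal{H}'\leq\frac{n-1}{n}\mathcal{H}$, deduce monotonicity of $Q=\mathcal{H}^n/\mathcal{A}^{n-1}$, and read off the inequality from the $C^\infty$ limit in Theorem~\ref{thm}; the non-strict case is handled by approximation. So the proposal is correct and matches the paper's route.

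One clarification: the step you flag as ``hard'' is in fact immediate from results already established earlier in the paper, and you are making it harder than it is. You do \emph{not} need an anisotropic Minkowski identity, a Cauchy--Schwarz step, or the adapted metric $\hat g$. Reilly's variational formula \eqref{var} (Proposition~\ref{HL}) applied with normal speed $\psi=F(\nu)/H_F$ gives directly
\[
\mathcal{H}'(t)=\int_{M_t}\frac{2\sigma_2(\kappa^F)}{H_F}\,F(\nu)\,d\mu,
\]
and the pointwise Newton inequality $2\sigma_2(\kappa^F)\leq\frac{n-1}{n}H_F^2$ then yields $\mathcal{H}'\leq\frac{n-1}{n}\mathcal{H}$, with equality iff $M_t$ is anisotropically umbilical. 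This is precisely the computation \eqref{vv} in the paper, written for the rescaled quantities. The adapted metric machinery is needed for the a priori estimates inside Theorem~\ref{thm}, not for the monotonicity here.

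A small omission: in the non-strict case $H_F\geq 0$ you pass to the limit for the inequality, but you do not address rigidity. The paper notes (following \cite{GL}) that an $F$-mean convex hypersurface attaining equality must in fact be strictly $F$-mean convex, which then feeds back into the umbilicity argument; you should include this step.
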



\

The rest of the paper is organized as follows. In Section \ref{sec2}, we define the anisotropic mean curvature and give some variational formulae. In Section \ref{sec3}, we introduce Andrews' reformulation of the anisotropic curvature and give several fundamental properties. In Section \ref{sec4}, we study the IAMCF and prove the a priori estimates and the exponential convergence. In Section \ref{sec5}, we prove the Minkowski inequality \eqref{AFI} for star-shaped hypersurfaces. In Section \ref{sec6}, we give some discussion on other inverse anisotropic curvature flows.



\

\section{Anisotropic mean curvature}\label{sec2}

Given  a smooth closed strictly convex hypersurface  $\w\subset \r^{n+1}$ enclosing the origin, 
 the support function of $\w$, which is defined by  \begin{eqnarray*}
F(x)=\sup_{X\in \w}\<x,X\>_{g_{euc}}, \quad x\in \ss^n,
\end{eqnarray*}
 is a smooth positive function on $\ss^n$.  We recall several well known facts for a $C^2$ convex hypersurface and its support function, see e.g. \cite{Sc}, Section 2.5. $\w$ can be represented  by $F$ as 
 \begin{eqnarray}\label{ww}
\w=\{\phi(x)\in \r^{n+1}|\phi(x)=F(x)x+\nabla^{\ss} F(x),x\in \ss^n\},
\end{eqnarray}
  where $\nabla^{\ss}$ denotes the covariant derivative on $\ss^n$, see e.g. \cite{An'}, Eq. (2.10). Let $A_F: \ss^n\to \L^2T^* \ss^n$ be a 2-tensor defined by \begin{eqnarray*}
A_F(x)=\nabla^{\ss}\nabla^{\ss}F(x)+F(x)\s \hbox{ for }x\in\ss^n,
\end{eqnarray*}
 where $\s$ denotes the round metric on $\ss^n$. The  strictly convexity of $\w$ implies that $A_F$ is positive definite. It is well-known that the eigenvalues of $A_F$ with respect to $\s$ are the principal radii of $\w$. Note that $A_F$ is a Codazzi tensor on $\ss^n$.
 Conversely, given a smooth positive function $F$ on $\ss^n$ such that $A_F$ is positive definite, there is a unique smooth strictly convex hypersurface $\w$ given by \eqref{ww} whose support function is $F$.

Let $(M,g)$ be a smooth hypersurface in $\r^{n+1}$ with induced metric $g$ from $g_{euc}$, and $\nu: M\to \ss^n$ be its Gauss map. The anisotropic Gauss map of $M$  is defined by $$\begin{array}{lll}\nu_F: &&M\to  \w\\
&&X\mapsto \phi(\nu(X))=F(\nu(X))\nu(X)+\nabla^\ss F(\nu(X)).\end{array} $$
The anisotropic principal curvature $\k^F=(\k^F_1,\cdots, \k^F_n)$ of $M$ with respect to $\w$ at $X\in M$  is defined as the eigenvalues of $$d\nu_F: T_X M\to T_{\nu_F(X)} \w.$$
In particular, the anisotropic mean curvature of $M$ with respect to $\w$ at $X\in M$  is $$H_F(X):=\sum_{i=1}^n \k^F_i=\tr(d\nu_F)=\tr(A_F(\nu(X))\circ d\nu_X).$$
If we denote by $g_{ij}$ and $h_{ij}$ the first and the second fundamental form of $M\subset \r^{n+1}$ respectively, then in local coordinates, $$H_F(X)=\sum_{i,j,k=1}^n {A_F}_i^j(\nu(X))g^{ik}(X)h_{kj}(X).$$
Here we view $A_F$ as a $(1,1)$-tensor on $\ss^n$.

An important variational characterization for $H_F$ is that it arises from the first variation of the parametric area functional $\int_M F(\nu)d\mu_g$.
Similarly, we have a variational formula for the total anisotropic mean curvature functional.
\begin{prop}[Reilly \cite{Re1, Re2}]\label{HL} Let $X_0: M\to\r^{n+1}$ be a smooth closed, oriented hypersurface and $X(\cdot,t)$ be a variation of $X_0$ with variational vector field $\frac{\p}{\p t}X(\cdot,t)=\psi(X)\nu(X)$, where $\psi\in C^{\infty}(M)$. Then
\begin{eqnarray}\label{var0}
\frac{d}{d t}\int_M F(\nu)d\mu_g=\int_M H_F(X) \psi(X) d\mu_g,
\end{eqnarray}
\begin{eqnarray}\label{var}
\frac{d}{d t}\int_M H_F(X) F(\nu)d\mu_g=\int_M 2\sigma_{2}(\k^F(X)) \psi(X) d\mu_g,
\end{eqnarray}
where $$\sigma_2(\k^F)=\sum_{i<j} \k^F_i\k^F_j.$$ 
\end{prop}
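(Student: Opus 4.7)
Both identities arise from first-variation computations under the normal variation $\partial_tX=\psi\nu$. The basic evolution identities I will use are
\begin{equation*}
\partial_t\nu=-\nabla^M\psi, \qquad \partial_t\,d\mu_g=H\psi\,d\mu_g,
\end{equation*}
where $\nabla^M$ is the tangential gradient on $M$ and $H$ is the scalar mean curvature. The chain rule then gives $\partial_tF(\nu)=\langle\nabla^\ss F(\nu),\partial_t\nu\rangle=-\langle\nabla^\ss F(\nu),\nabla^M\psi\rangle$ after identifying $T_\nu\ss^n$ with $T_XM=\nu^\perp$.

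For \eqref{var0} I would differentiate under the integral sign and integrate by parts on the closed hypersurface $M$ to obtain
\begin{equation*}
\frac{d}{dt}\int_M F(\nu)\,d\mu_g=\int_M\bigl[\operatorname{div}_M\!\bigl(\nabla^\ss F(\nu)\bigr)+F(\nu)H\bigr]\psi\,d\mu_g.
\end{equation*}
The proof then reduces to the pointwise identity $\operatorname{div}_M(\nabla^\ss F(\nu))+F(\nu)H=H_F$. Fixing an orthonormal frame $\{e_i\}$ on $M$, the Euclidean derivative of $\nabla^\ss F\circ\nu$ in direction $e_i$ splits into a tangential piece coming from $\nabla^\ss\nabla^\ss F$ applied to $d\nu(e_i)$ plus a normal correction along $\nu$ (arising from $\langle\nabla^\ss F,\nu\rangle=0$). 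The normal piece is annihilated when paired with $e_i$, so $\operatorname{div}_M(\nabla^\ss F(\nu))=\operatorname{tr}(\nabla^\ss\nabla^\ss F\circ d\nu)$, and adding $F(\nu)H=F(\nu)\operatorname{tr}(d\nu)$ assembles $\operatorname{tr}((F\sigma+\nabla^\ss\nabla^\ss F)\circ d\nu)=\operatorname{tr}(A_F\circ d\nu)=H_F$.

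For \eqref{var} the template is the same, but I would additionally track the evolution of $H_F=\operatorname{tr}(A_F(\nu)\circ d\nu)$. Using the standard formula for $\partial_t h^j{}_i$ in terms of $\nabla^M\nabla^M\psi$ and quadratic curvature terms $h\cdot h\cdot\psi$, together with the chain rule $\partial_t[A_F(\nu)]=-\nabla^\ss_{\nabla^M\psi}A_F$, I would express $\partial_tH_F$ as a second-order linear operator in $\psi$. Combining with the contributions from $\partial_tF(\nu)$ and $\partial_t d\mu_g$, and integrating by parts twice, the integrand should regroup into the form
\begin{equation*}
\psi\cdot\nabla^M_i\nabla^M_j\,T^{ij}\;+\;2\sigma_2(\kappa^F)\,\psi,
\end{equation*}
where $T^{ij}$ is the first anisotropic Newton transformation of the anisotropic shape operator $d\nu_F=A_F(\nu)\circ d\nu$ acting on $TM$. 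Formula \eqref{var} follows provided the double divergence $\nabla^M_i\nabla^M_jT^{ij}$ vanishes.

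The main obstacle, and the only genuinely nontrivial input, is this anisotropic Newton-tensor divergence identity. Its verification requires simultaneously (i) the Codazzi identity for $h_{ij}$ on $M$, which holds for any hypersurface in $\r^{n+1}$, and (ii) the Codazzi property of the tensor $A_F=F\sigma+\nabla^\ss\nabla^\ss F$ on the round sphere (noted in Section \ref{sec2}, and following from the constant sectional curvature of $\ss^n$). Pulling both back to $M$ via the Gauss map, one checks that $d\nu_F$ is itself a symmetric Codazzi $(1,1)$-tensor on $M$, at which point the classical Newton-tensor argument runs verbatim and yields $\nabla^M_jT^{ij}=0$. All second-order-in-$\psi$ terms then cancel and \eqref{var} is established.
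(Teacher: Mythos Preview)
Your outline for \eqref{var0} is exactly the paper's argument. For \eqref{var} you have also located the same crux: the paper's computation terminates in the identity $\nabla_j(A_{ij}(\nu)h_{ip})=\nabla_p H_F$, which is precisely the divergence-freeness $\nabla_j T^j{}_p=0$ of the first anisotropic Newton tensor $T=H_F\,\mathrm{id}-d\nu_F$, and the paper derives it from the Codazzi property of $h$ on $M$ together with that of $A_F$ on $\ss^n$, just as you propose.

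Two corrections are in order. First, your assertion that $d\nu_F=A_F(\nu)\circ d\nu$ is a \emph{symmetric} $(1,1)$-tensor on $(M,g)$ is false in general: both $A_F(\nu)$ and $d\nu$ are $g$-self-adjoint, but their composition is $g$-self-adjoint only when they commute. (It is self-adjoint for the anisotropic metric $\hat g$, but then it is no longer Codazzi; see the paper's \eqref{Codazzi}.) Fortunately the symmetry plays no role: for the \emph{first} Newton tensor the Codazzi property alone gives $\nabla_j T^j{}_p=\nabla_p(\operatorname{tr}S)-\nabla_j S^j{}_p=0$, and Codazzi does hold for $d\nu_F$ with respect to $\nabla^M$, by exactly the two inputs you name. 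Second, the residual term does not package as a double divergence $\psi\,\nabla_i\nabla_j T^{ij}$; after the integrations by parts what survives (the paper's term $I$) is $\int_M(\nabla_j T^j{}_p)\,\nabla^\ss_p F(\nu)\,\psi\,d\mu_g$, a single divergence of $T$ paired with $\nabla^\ss F$, which vanishes by the same identity.
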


The variational formulae \eqref{var0} and \eqref{var} may be familiar to experts. When $F=1$,  such formulas are well-known, see e.g. Reilly \cite{Re1}. For general $F$, Reilly \cite{Re2} derived the variational formula for $\int_M \sigma_k F(\nu)d\mu$ for any $k$, see also He-Li \cite{HL}. Here we give a proof for the case $H_F$ for the convenience of readers.

\begin{proof}By the tensorial property, we do not distinguish upper and lower indexes in the proof whenever applicable.
Since $\p_t \nu=-\nabla \psi$ and $\p_td\mu_g=H\psi d\mu_g$,  we have by integration by parts that
\begin{eqnarray*}
\frac{d}{d t}\int_M F(\nu)d\mu_g&=&\int_M -\nabla_p^\ss F(\nu)\nabla_p \psi +F(\nu) H \psi
\\&=&\int_M \nabla_q^\ss\nabla_p^\ss F(\nu)h_{pq} \psi +F(\nu) H\psi \\&=&\int_M H_F \psi .
\end{eqnarray*}
Here $H$ is the usual mean curvature of $M\subset \r^{n+1}$.

We also have $\p_t {h_i^j}=-\nabla_i\nabla_j \psi-\psi h_{ik} h_{kj}$. Therefore
\begin{eqnarray}\label{var1}
&&\frac{d}{d t}\int_M H_F(X) F(\nu)d\mu\\&=&\int_M-\nabla^\ss_p A_{ij}(\nu)\nabla_p \psi h_{ij} F(\nu)+A_{ij}(\nu)(-\nabla_i\nabla_j \psi-\psi h_{ik} h_{kj})F(\nu)\psi\nonumber\\&&+\int_M-H_F \nabla^\ss_p F(\nu)\nabla_p \psi+H_FF(\nu) H\psi .\nonumber
\end{eqnarray}
 Since $A$ is Codazzi tensor on $\ss^n$, by integration by parts,
 \begin{eqnarray}\label{var2}
&&\int_M -A_{ij}(\nu)\nabla_i\nabla_j \psi F(\nu)\\&=&\int_M \nabla^\ss_p A_{ij}(\nu)h_{ip} \nabla_j \psi F(\nu)+A_{ij}(\nu)\nabla_j \psi \nabla_p^\ss F(\nu)h_{ip} \nonumber\\&=&\int_M \nabla^\ss_p A_{ij}(\nu)h_{ij} \nabla_p \psi F(\nu)+ A_{ij}(\nu)\nabla_j \psi \nabla_p^\ss F(\nu)h_{ip} .\nonumber
\end{eqnarray}
Integrating by parts again, we have
\begin{eqnarray}\label{var3}
&&\int_M A_{ij}(\nu)\nabla_j \psi \nabla_p^\ss F(\nu)h_i^p=\int_M -\left(\nabla_j(A_{ij}(\nu)h_{ip})\nabla_p^\ss F(\nu)+ A_{ij}(\nu)h_{ip}\nabla_p^\ss \nabla_q^\ss F(\nu) h_{jq}\right)\psi, 
\end{eqnarray}
\begin{eqnarray}\label{var4}
&&\int_M -H_F \nabla^\ss_p F(\nu)\nabla_p \psi= \int_M \left(\nabla_p H_F \nabla^\ss_p F(\nu)+ H_F\nabla^\ss_p\nabla_q^\ss F(\nu)h_{pq}\right) \psi.
\end{eqnarray}
Combining \eqref{var1}--\eqref{var4}, we deduce
\begin{eqnarray}\label{var5}
&&\frac{d}{d t}\int_M H_F(X) F(\nu)d\mu\\&=&\int_M \left(\nabla_p H_F-\nabla_j(A_{ij}(\nu)h_{ip})\right) \nabla^\ss_p F(\nu)\psi\nonumber\\&&+\int_M \left(-A_{ij}(\nu)h_{ip}\nabla_p^\ss \nabla_q^\ss F(\nu) h_j^q-A_{ij}(\nu) h_{ik} h_{kj} F(\nu)\right)\psi \nonumber\\&&+\int_M \left(H_F\nabla^\ss_p\nabla_q^\ss F(\nu)h_{pq}+H_FF(\nu) H\right) \psi \nonumber\\
&=&I+II+III.\nonumber\end{eqnarray}
We easily see that
\begin{eqnarray}\label{var6}
II+III&=&\int_M -A_{ij}(\nu)A_{pq}(\nu)h_{ip}h_{jq}\psi +H_FA_{pq}(\nu)h_{pq}\psi \\&=&  \int_M (H_F^2-|\k^F|^2)\psi=\int_M 2\sigma_2(\k^F) \psi .\nonumber
\end{eqnarray}
On the other hand, since $A$ is Codazzi on $\ss^n$ and $h$ is Codazzi on $X$, we have
\begin{eqnarray*}
\nabla_j(A_{ij}(\nu)h_{ip})&=&\nabla^\ss_q A_{ij}(\nu)h_{ip}h_{jq}+A_{ij}(\nu)\nabla_j h_{ip}\\&=&\nabla^\ss_i A_{jq}(\nu)h_{ip}h_{jq}+A_{ij}(\nu)\nabla_p h_{ij}\\&=&\nabla_p(A_{ij}(\nu)h_{ij})=\nabla_p H_F.
\end{eqnarray*}
Thus $I=0$. The assertion follows from \eqref{var5} and \eqref{var6}.
\end{proof}


\

\section{Andrews' formulation of anisotropic curvatures}\label{sec3}

In this section we recall Andrews' formulation of anisotropic curvatures \cite{An4}. In \cite{X2}, we reformulated Andrews' idea in a more direct way. Here we  follow the notation in \cite{X2}.

As in Section \ref{sec2}, let $\w\subset \r^{n+1}$ be a smooth closed strictly convex hypersurface enclosing the origin, whose support function is $F\in C^\infty(\ss^n)$. We extend $F\in C^\infty(\ss^n)$ homogeneously to be a $1$-homogeneous function $F\in C^\infty(\r^{n+1}\setminus\{0\})$ by $$F(x)=|x|F\left(\frac{x}{|x|}\right), \quad x\in \r^{n+1}\setminus \{0\}\hbox{ and } F(0)=0.$$
One can check easily that $F\in C^\infty(\r^{n+1}\setminus\{0\})$ is in fact a Minkowski norm in $\r^{n+1}$ in the sense that 
\begin{itemize}
\item[(i)] $F$ is a norm in $\mathbb{R}^{n+1}$, i.e.,  $F$ is a convex, $1$-homogeneous function satisfying $F(x)>0$ when $x\neq 0$;
\item[(ii)] $F$ satisfies a uniformly elliptic condition: $D^2 (\frac12 F^2)$ is positive definite in $\mathbb{R}^{n+1}\setminus \{0\}$.
\end{itemize} 
Here $D$ is the Euclidean gradient and $D^2$ is the Euclidean Hessian.
In fact, (ii) is  equivalent that  $(\n^\ss\n^\ss F+F\s)$ is positive definite on $(\ss^n,\s)$.  (see e.g. \cite{X1}, Proposition 1.4).

For a Minkowski norm $F\in C^\infty(\r^{n+1}\setminus\{0\})$, the dual norm of $F$ is defined as  $$F^0(\xi):=\sup_{x\neq 0}\frac{\langle x,\xi\rangle}{F(x)},\quad \xi\in \mathbb{R}^{n+1}.$$ $F^0$ is also a Minkowski norm lying in $C^\infty(\r^{n+1}\setminus\{0\})$, see e.g. \cite{Shen}, Lemma 3.1.2.

We introduce a Riemannian metric $G$ with respect to $F^0$ in $T\r^{n+1}$:
\begin{eqnarray*}
&&G(\xi)(V,W):=\sum_{\a,\b=1}^{n+1}\frac{\p^2 \frac12(F^0)^2(\xi)}{\p \xi^\a\p \xi^\b} V^\a W^\b, \quad\hbox{ for } \xi\in \mathbb{R}^{n+1}\setminus \{0\}, V,W\in T_\xi{\r^{n+1}}.
\end{eqnarray*}

Since $F^0$ is in general not quadratic, the third derivative of $F^0$ does not vanish. We set
\begin{eqnarray*}
&&Q(\xi)(U,V,W):=\sum_{\alpha,\beta,\gamma=1}^{n+1} Q_{\alpha\beta\gamma}(\xi)U^\alpha V^\beta W^\gamma:=\sum_{\alpha,\beta,\gamma=1}^{n+1} \frac{\partial^3(\frac12(F^0)^2(\xi)}{\partial \xi^\alpha \partial \xi^\beta \partial \xi^\gamma}U^\alpha V^\beta W^\gamma,
\end{eqnarray*}
for $\xi\in \mathbb{R}^{n+1}\setminus \{0\},$ $U,V,W\in T_\xi{\r^{n+1}}.$

When we restrict the metric $G$ to $\mathcal{W}$,  the $1$-homogeneity of $F^0$ tells us
\begin{eqnarray*}
&G(\xi)(\xi,\xi)=1,  G(\xi)(\xi, V)=0, \quad \hbox{ for } \xi\in \w, V\in T_\xi \w.
\\&Q(\xi)(\xi, V, W)=0, \quad \hbox{ for } \xi\in\w, V, W\in \r^{n+1}.
\end{eqnarray*}
We remark that for deducing above formulae, we need also to use the fact $\mathcal{W}=\{\xi\in \r^{n+1}: F^0(\xi)=1\}$.

Let us now return to a hypersurface $M\subset \r^{n+1}$. 
The anisotropic normal is defined by $\nu_F=F(\nu)\nu+\nabla^\ss F.$ It follows from the $1$-homogeneity of $F$ that
\begin{eqnarray*}
\nu_F= DF(\nu).
\end{eqnarray*} 
Since $\nu_F(X)\in \w$ for $X\in M$, we have 
\begin{eqnarray*}
&G(\nu_F)(\nu_F,\nu_F)=1,  G(\nu_F)(\nu_F, V)=0, \quad \hbox{ for } V\in T_X M, 
\\&Q(\nu_F)(\nu_F, V, W)=0, \quad \hbox{ for } V, W\in \r^{n+1}.
\end{eqnarray*}
This means  $\nu_F(X)$ is perpendicular to $T_X M$ with respect to the metric $G(\nu_F)$.
This motivates us to define \begin{eqnarray*}
\hat{g}(X):=G(\nu_F(X))|_{T_X M}, \quad X\in M
\end{eqnarray*}
 as a Riemannian metric on $M\subset \mathbb{R}^{n+1}$. We denote by $\hat{D}$ and $\hat{\nabla}$ the Levi-Civita connections of $G$ on $\r^{n+1}$ and $\hat{g}$ on $M$ respectively.

As in Section 2, the anisotropic principal curvature $\k^F$ of $M\subset \mathbb{R}^{n+1}$ with respect to $\w$ is defined as the eigenvalues of  $$d\nu_F: T_X M\to T_{\nu_F(X)} \w.$$

Using $G$ and $\hat{g}$, we can reformulate $\k^F$ and $H_F$ as follows.
Denote by $\hat{g}_{ij}$ and $\hat{h}_{ij}$ the first and the second fundamental form of $(M, \hat{g})\subset (\r^{n+1}, G)$, i.e.,
$$\hat{g}_{ij}=G(\nu_F(X))(\p_i X, \p_j X),\quad \hat{h}_{ij}=G(\nu_F(X))(\hat{D}_{\p_i }\nu_F, \p_j X),$$
Then $\k^F$ is the  eigenvalues of $(\hat{g}^{ik}\hat{h}_{kj})$ and 
\begin{eqnarray*}
H_F=\sum_{i,j=1}^n \hat{g}^{ij}\hat{h}_{ij}.
\end{eqnarray*}

It is direct to see that for $M=\w$, we have $\nu_F(\w)=X(\w)$, $\hat{h}_{ij}=\hat{g}_{ij}$ and $H_F=n$.

For the previous reformulation, we have the following  anisotropic Gauss-Weingarten type formulae and the anisotropic Gauss-Codazzi type equation.
\begin{lemma}[Xia \cite{X2}, Lemma 2.5] \label{lem2-1}
 \begin{eqnarray}\label{Gauss}
\partial_i\partial_j X=-\hat{h}_{ij}\nu_F+\hat{\nabla}_{\partial_i } \partial_j +\hat{g}^{kl}A_{ijl}  \partial_k X; \hbox{ (Gauss formula)}\end{eqnarray}
\begin{eqnarray}\label{Weingarten}
\partial_i \nu_F=\hat{g}^{jk}\hat{h}_{ij}\partial_k X; \hbox{ (Weingarten formula) }
\end{eqnarray}
 \begin{eqnarray}\label{Gausseq}
\hat{R}_{ijkl}&=&\hat{h}_{ik}\hat{h}_{jl}-\hat{h}_{il}\hat{h}_{jk}+\hat{\nabla}_{\partial_l} A_{jki}-\hat{\nabla}_{\partial_k } A_{jli}\\&&+\hat{g}^{pm}A_{jkp} A_{mli}-\hat{g}^{pm}A_{jlp} A_{mki}; \hbox{ (Gauss equation) }\nonumber
\end{eqnarray}
\begin{eqnarray}\label{Codazzi}
\hat{\nabla}_k\hat{h}_{ij}+\hat{h}_j^l A_{lki}=\hat{\nabla}_j\hat{h}_{ik}+\hat{h}_k^l A_{lji}. \hbox{ (Codazzi equation) }
\end{eqnarray}
Here $\hat{R}$ is the Riemannian curvature tensor of $\hat{g}$, $A$ is a $3$-tensor
\begin{eqnarray}\label{AA}
A_{ijk}=-\frac12\left(\hat{h}_i^l Q_{jkl}+\hat{h}_j^l Q_{ilk}-\hat{h}_k^l Q_{ijl}\right),
\end{eqnarray} where $Q_{ijk}=Q(\nu_F)(\partial_i X, \partial_j X, \partial_k X)$.
\end{lemma}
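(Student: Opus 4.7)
The plan is to verify the four identities as anisotropic analogs of the classical Gauss--Weingarten--Codazzi formulas, with correction terms coming from the Christoffel symbols of $G$ (encoded in the totally symmetric three-tensor $Q$, since $G$ is itself a Hessian).

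For the Weingarten formula, I would first note that $\p_i \nu_F$ lies in $T_{\nu_F(X)}\w$ because $\nu_F$ takes values in $\w = \{F^0 = 1\}$. The Legendre duality between $F$ and $F^0$ gives $DF^0(\nu_F) = \nu/F(\nu)$, so $T_{\nu_F(X)}\w = \nu^\perp = T_X M$ as $n$-planes in $\r^{n+1}$. Hence one may write $\p_i \nu_F = a_i^k\,\p_k X$ and solve for $a_i^k$ by pairing with $\p_j X$ in the metric $G(\nu_F)$. Checking that this matches the definition $\hat{h}_{ij} = G(\nu_F)(\hat{D}_{\p_i}\nu_F,\p_j X)$ reduces to showing that the $\hat{D}$-correction to $\p_i \nu_F$, when paired in $G(\nu_F)$ against $\p_j X$, contributes $Q(\nu_F)(\nu_F,\cdot,\cdot)$, which vanishes by $1$-homogeneity of $F^0$.

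For the Gauss formula, I would split the ambient derivative $\hat{D}_{\p_i X}(\p_j X)$ into its $G(\nu_F)$-normal and $G(\nu_F)$-tangent components. Pairing with $\nu_F$ and using $G(\nu_F)(\nu_F,\nu_F)=1$ together with the identity $G(\nu_F)(\nu_F,\p_j X) = 0$ shows that the normal part is $-\hat{h}_{ij}\nu_F$, while the tangent part is by definition $\hat{\nabla}_{\p_i}\p_j$. To rewrite $\hat{D}$ in terms of the Euclidean second derivative $\p_i\p_j X$, one subtracts the Christoffel symbols of $G$, which are of the form $\tfrac12 G^{\sigma\tau}Q_{\alpha\beta\tau}$ by total symmetry of $Q$; their tangential projection, together with $\hat{h}$, assembles into the claimed tangential correction $\hat{g}^{kl}A_{ijl}\p_k X$ with $A_{ijk}$ of the stated form.

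The Codazzi equation follows by differentiating the definition of $\hat{h}_{ij}$ along $\p_k$ and substituting the Gauss--Weingarten formulas: the mixed third Euclidean derivative of $X$ is symmetric in $(j,k)$, so antisymmetrization in $(j,k)$ kills the leading pieces, leaving exactly the $A$-coupling stated. The Gauss equation is obtained from $\hat{R}(\p_i,\p_j)\p_k = \hat{D}_{\p_i}\hat{D}_{\p_j}\p_k - \hat{D}_{\p_j}\hat{D}_{\p_i}\p_k$ by substituting the Gauss formula twice and reading off the tangential component; the $\hat{h}\hat{h}$ and $\hat{\nabla}A$ terms are produced by the derivative of $\hat{h}$ against $\nu_F$ and by differentiating the $A$-correction, respectively. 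The main obstacle is bookkeeping: distinguishing Euclidean, $\hat{D}$, and $\hat{\nabla}$ derivatives, keeping track of evaluations at $X$ versus $\nu_F(X)$, and invoking the two key cancellations $G(\nu_F)(\nu_F,V)=0$ for $V \in T_X M$ and $Q(\nu_F)(\nu_F,V,W)=0$ at the right moments.
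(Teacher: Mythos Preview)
The paper does not prove this lemma; it is quoted from \cite{X2}, Lemma~2.5, so there is no in-paper argument to compare against. Your outline for the Weingarten, Codazzi, and Gauss equations is sound in spirit and the two cancellations you single out, $G(\nu_F)(\nu_F,\partial_j X)=0$ and $Q(\nu_F)(\nu_F,\cdot,\cdot)=0$, are indeed the key identities.

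There is, however, a genuine conceptual gap in your derivation of the Gauss formula. You propose to use the ambient Levi--Civita connection $\hat{D}$ of $(\r^{n+1},G)$, take the tangential part of $\hat{D}_{\partial_i}\partial_j X$ to be $\hat{\nabla}_{\partial_i}\partial_j$, and then pass to $\partial_i\partial_j X$ by subtracting the Christoffel symbols of $G$, which you write as $\tfrac12 G^{\sigma\tau}Q_{\alpha\beta\tau}$. The problem is that the immersion $(M,\hat{g})\hookrightarrow(\r^{n+1},G)$ is \emph{not} isometric: the metric $\hat{g}$ at $X\in M$ is $G(\nu_F(X))|_{T_XM}$, not $G(X)|_{T_XM}$. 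Hence the tangential part of $\hat{D}_{\partial_i}\partial_j X$ is not $\hat{\nabla}_{\partial_i}\partial_j$, and the ambient Christoffel symbols of $G$ would be evaluated at $X$, producing $Q(X)$-terms rather than the $Q(\nu_F)$-terms that actually appear in $A_{ijk}$.

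The correct route is to bypass $\hat{D}$ entirely: decompose the Euclidean derivative $\partial_i\partial_j X$ with respect to the $G(\nu_F)$-orthogonal splitting $\r^{n+1}=T_XM\oplus\r\nu_F$, obtain the normal part $-\hat{h}_{ij}\nu_F$ as you describe, and then compute the Christoffel symbols of $\hat{g}$ on $M$ directly from $\hat{g}_{jm}=G(\nu_F)(\partial_j X,\partial_m X)$. Differentiating this in $\partial_i$ produces, via the chain rule and the already-established Weingarten formula, the term $Q(\nu_F)(\partial_i\nu_F,\partial_j X,\partial_m X)=\hat{h}_i^l Q_{ljm}$; the Koszul formula then yields
\[
G(\nu_F)(\partial_i\partial_j X,\partial_m X)=\hat{\Gamma}^k_{ij}\hat{g}_{km}+A_{ijm}
\]
with exactly the stated $A$. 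This also explains where the factor of $\hat{h}$ in $A$ comes from---it enters through $\partial_i\nu_F$, not through ambient Christoffel symbols. Once this is fixed, your plan for the Codazzi and Gauss equations goes through.
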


\begin{remark}We remark here and in the subsequent calculations that, we regard $X$ and $\nu_F$ as vector-valued functions in $\r^{n+1}$ with a fixed Cartesian coordinate. Hence the terms $\partial_i\partial_j X$ and $\partial_i \nu_F$ are understood as the usual partial derivative on vector-valued functions.
\end{remark}
Note that the $3$-tensor $A$ on $(M,\hat{g})\to (\r^{n+1},G)$ depends on $\hat{h}_i^j$. It is direct to see that $Q$ is totally symmetric in all three indices, while $A$ is only symmetric for the first two indices.

Let us compare the previous formulae with the isotropic case. The Weingarten formula is in the same behavior, while the Gauss formula involves an extra tangential part $\hat{g}^{kl}A_{ijl}  \partial_k X$ besides the Levi-Civita connection part. The anisotropic Codazzi type equation implies $\hat{h}_{ij}$ is not a Codazzi tensor in $(M,\hat{g})$. The anisotropic Gauss type equation also includes messier terms involving $A$ and $\hat{\nabla} A$. These quite complicated formulae make the analysis of the anisotropic curvature problems much harder.

Let us write the anisotropic area element by $$d\mu_F:=F(\nu)d\mu_g.$$
In \cite{X2}, we proved an important property about the Laplacian operator $\hat{\Delta}$ with respect to $\hat{g}$ and $d\mu_F$, which will play an important role in this paper.
\begin{lemma}[Xia \cite{X2}, Lemma 2.8]\label{volume}
Let $d\mu_{\hat{g}}$ be the induced volume form of $(M,\hat{g})$. Assume that $$d\mu_F(X)=F(\nu(X))d\mu_g(X)=\varphi(X) d\mu_{\hat{g}}(X).$$ Then
\begin{equation*}
\hat{\nabla}_i \log\varphi= \hat{g}^{jk}A_{ijk}.
\end{equation*}
Consquently, for any $f\in C^\infty(M)$,
\begin{equation*}
\int_M \hat{\Delta} f+\hat{g}^{jk}A_{ijk}\hat{\nabla}^i f d\mu_F=0.
\end{equation*}
\end{lemma}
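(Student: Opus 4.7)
My plan begins by pinning down $\varphi$ explicitly. I would extend the ordered basis $\{\p_1 X,\dots,\p_n X\}$ of $T_X M$ to an ordered basis of $\r^{n+1}$ in two ways: by adjoining the Euclidean unit normal $\nu$, or by adjoining $\nu_F=F(\nu)\nu+\n^{\ss}F$. Since $\{\p_iX,\nu\}$ is $g_{euc}$-orthogonal with $|\nu|_{g_{euc}}=1$, and $\{\p_iX,\nu_F\}$ is $G(\nu_F)$-orthogonal with $G(\nu_F)(\nu_F,\nu_F)=1$, taking Gram determinants yields
\[
\det\bigl[\p_1 X\,|\cdots|\,\p_n X\,|\,\nu\bigr]^2=\det g,\qquad
\det\bigl[\p_1 X\,|\cdots|\,\p_n X\,|\,\nu_F\bigr]^2\det G(\nu_F)=\det\hat g.
\]
Because $\n^\ss F(\nu)\in T_X M$, multilinear expansion in the last column gives $\det[\cdots|\nu_F]=F(\nu)\det[\cdots|\nu]$. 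Combining these, $\sqrt{\det\hat g}=F(\nu)\sqrt{\det G(\nu_F)}\sqrt{\det g}$, and therefore
\[
\varphi=\frac{F(\nu)\sqrt{\det g}}{\sqrt{\det\hat g}}=\frac{1}{\sqrt{\det G(\nu_F)}}.
\]

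Next, since $\log\varphi$ is a scalar, $\hat{\n}_i\log\varphi=\p_i\log\varphi=-\tfrac12\,G^{\a\b}\p_i G_{\a\b}(\nu_F)$. The chain rule, $\p_i G_{\a\b}(\nu_F)=Q_{\a\b\g}(\nu_F)\,\p_i\nu_F^\g$, combined with the Weingarten formula \eqref{Weingarten} $\p_i\nu_F=\hat{h}_i^{\,k}\p_k X$, makes this a contraction of $Q$ against $G^{-1}$. In the $G(\nu_F)$-orthogonal basis $\{\p_iX,\nu_F\}$ of $\r^{n+1}$ the inverse metric splits as $G^{-1}=\hat g^{ij}\p_iX\otimes\p_jX+\nu_F\otimes\nu_F$; the $\nu_F\otimes\nu_F$ piece is annihilated by $Q(\nu_F)(\nu_F,\cdot,\cdot)=0$, leaving
\[
\hat{\n}_i\log\varphi=-\tfrac12\,\hat{h}_i^{\,l}\,\hat g^{jk}Q_{jkl}.
\]

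To complete the first identity, I would expand $\hat g^{jk}A_{ijk}$ using \eqref{AA}. By the total symmetry of $Q$ and the symmetry of $\hat h^{jk}=\hat g^{ji}\hat h_i^{\,k}$, both middle terms satisfy $\hat g^{jk}\hat h_j^{\,l}Q_{ilk}=\hat g^{jk}\hat h_k^{\,l}Q_{ijl}=\hat h^{ab}Q_{iab}$, and they enter $A_{ijk}$ with opposite signs, so they cancel; only the first term survives, giving $\hat g^{jk}A_{ijk}=-\tfrac12\,\hat h_i^{\,l}\hat g^{jk}Q_{jkl}$, matching the previous display. For the integration identity I write $d\mu_F=\varphi\,d\mu_{\hat g}$ and apply the divergence theorem on the closed manifold $(M,\hat g)$:
\[
\int_M \hat\Delta f\,d\mu_F=\int_M \varphi\,\hat\Delta f\,d\mu_{\hat g}=-\int_M \hat{\n}^i f\,\hat{\n}_i\varphi\,d\mu_{\hat g}=-\int_M \hat g^{jk}A_{ijk}\,\hat{\n}^i f\,d\mu_F,
\]
using $\hat{\n}_i\varphi=\varphi\,\hat g^{jk}A_{ijk}$. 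The main obstacle is the first step: identifying $\varphi=(\det G(\nu_F))^{-1/2}$ cleanly. Once this is in hand, the remaining steps are essentially bookkeeping with the symmetries of $Q$ and $\hat h$.
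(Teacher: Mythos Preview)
Your argument is correct. The paper does not reproduce a proof of this lemma here; it simply cites it from \cite{X2}, so there is no in-paper proof to compare against. Your identification $\varphi=(\det G(\nu_F))^{-1/2}$ via the two Gram determinants is clean and valid (the point $\nabla^{\ss}F(\nu)\in T_XM$ holds because $T_XM=\nu(X)^\perp=T_{\nu(X)}\ss^n$ as subspaces of $\r^{n+1}$), and the subsequent steps are straightforward: Jacobi's formula plus the chain rule give the $Q$-contraction, the $\nu_F\otimes\nu_F$ block of $G^{-1}$ drops by $Q(\nu_F)(\nu_F,\cdot,\cdot)=0$, the cancellation in $\hat g^{jk}A_{ijk}$ uses only the total symmetry of $Q$ and the symmetry of $\hat h_{ij}$ (which follows from the Gauss formula \eqref{Gauss}, since $\partial_i\partial_j X$, $\hat\nabla_{\partial_i}\partial_j$, and $A_{ij\cdot}$ are all symmetric in $i,j$), and the final integration-by-parts on the closed weighted manifold $(M,\hat g,\varphi\,d\mu_{\hat g})$ is standard.
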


\

\section{Inverse anisotropic mean curvature flow}\label{sec4}

In this section we study the IAMCF \eqref{iamcf} initiating from a star-shaped, strictly F-mean convex hypersurface. 


Let us fix some notation.  We use $\nabla^\ss$  to denote the covariant derivative on the round sphere $(\ss^n,\s)$.  We use $g_{ij}$, $h_{ij}$, $\nabla$  to denote the first and the second fundamental form, the covariant derivative of $(M,g)\subset (\r^{n+1}, g_{euc})$,  while $\hat{g}_{ij}$, $\hat{h}_{ij}$ $\hat{\nabla}$ to denote that of $(M,\hat{g})\subset (\r^{n+1}, G)$, respectively. 

\

 It follows from $\nu_F=F(\nu)\nu+\nabla^\ss F(\nu)$ that up to a diffeomorphism of $M$, the flow \eqref{iamcf} is equivalent to 
 $$\p_t X=\frac{F(\nu)}{H_F}\nu.$$
 
Since $X_0$ is star-shaped with respect to the origin, we can write $X_0$ as a graph of a function  over $\ss^n$: $$X_0=\{(\rho_0(x),x): x\in \ss^n\}.$$

If each $X(\cdot, t)$ is star-shaped,  the evolved hypersurfaces can be reparametrized as  graphs  over $(\ss^n,\sigma)$:
$$X(x,t)=\rho(x,t)x, \quad x\in \ss^n,$$ 
where $\rho(x,t)$ is the graph function. Denote by $\g=\log \rho$. 
Then it is standard to derive
\begin{eqnarray*}
&&\nu=\frac{x-\nabla^\ss \g}{\sqrt{1+|\nabla^\ss \g|^2}},
\end{eqnarray*}
\begin{eqnarray*}
&& H_F=A_{ij}(\nu)\frac{1}{\rho \sqrt{1+|\nabla^\ss \g|^2}}\left[\delta_{ij}-\left(\sigma^{ik}-\frac{\g^i\g^k}{{1+|\nabla^\ss \g|^2}}\right)\g_{jk}\right],
\end{eqnarray*}
and the scalar parabolic equation for $\g$:
\begin{eqnarray}\label{g}
\frac{\p \g}{\p t}=\frac{\sqrt{1+|\nabla^\ss \g|^2}F}{\rho H_F}=\frac{(1+|\nabla^\ss \g|^2)F}{A_{ij}(\nu)\left[\delta_{ij}-\left(\sigma^{ik}-\frac{\g^i\g^k}{{1+|\nabla^\ss \g|^2}}\right)\g_{jk}\right]}.\end{eqnarray}
Note that here $$F=F\left(\frac{x-\nabla^\ss \g}{\sqrt{1+|\nabla^\ss \g|^2}}\right),\quad A_{ij}(\nu)=A_{ij}\left(\frac{x-\nabla^\ss \g}{\sqrt{1+|\nabla^\ss \g|^2}}\right).$$
The dependence of RHS of \eqref{g} on $\nu$ makes the a priori estimates subtle.

Equation \eqref{g} is a fully nonlinear parabolic equation. The short time existence is standard by using implicit function theorem. Without loss of generality, we assume that the flow exists for $[0,T)$ and $X(\cdot, t), t\in [0,T)$ is star-shaped. To prove the long time existence, we need to establish the a priori estimates independent of $T$ for \eqref{g}. However, it is quite complicated to work directly on \eqref{g} because of its dependence of $x$ as just mentioned. We mostly work on the original flow equation \eqref{iamcf}.

Before getting into the a priori estimates, Let us first derive some evolution equations for the flow \eqref{iamcf}. 
Let $$u:=\<X,\nu\>_{g_{euc}}$$ be the support function of $X(\cdot, t)$ and $$\hat{u}:=G(\nu_F)(\nu_F,X)$$ be the anisotropic support function of $X(\cdot, t)$. 
It is easy to see that
\begin{eqnarray}\label{relation u}
\hat{u}=\frac{u}{F(\nu)}.
\end{eqnarray}
Indeed, 
\begin{eqnarray*}
\hat{u}=G(\nu_F)(\nu_F,X)=\<DF^0(DF(\nu)),X\>_{g_{euc}}=\<\frac{\nu}{F(\nu)},X\>_{g_{euc}}=\frac{u}{F(\nu)},
\end{eqnarray*}
where we used $DF^0(DF(x))=\frac{x}{F(x)},$ see e.g. \cite{X1}, Proposition 1.3.
Equation \eqref{relation u} implies that there exists two constants $\l, \L$ depending only on $F$, such that 
\begin{eqnarray}\label{relation u2}
\l u\leq \hat{u}\leq \L u.
\end{eqnarray}


\begin{prop}\label{evolv}
Let $f=\frac{1}{H_F}$ be the speed function. Along the flow (\ref{iamcf}), we have the following evolution equations:

\begin{enumerate}
\item[(i)] $\nu_F$ evolves under
\begin{eqnarray}\label{evolve nu}
\p_t \nu_F=-\hat{\nabla} f;
\end{eqnarray}
\item[(ii)]  The anisotropic area form $d\mu_F$ evolves under \begin{eqnarray}\label{evolve mu}
\p_t d\mu_F=H_F fd\mu_F=d\mu_F,
\end{eqnarray}
\item[(iii)] $\hat{g}_{ij}$  evolves under \begin{eqnarray}\label{evolve g}
\p_t \hat{g}_{ij}=2f \hat{h}_{ij}-Q_{ijk}\hat{\nabla}^k f;
\end{eqnarray}
\item[(iv)]   $\hat{h}_{i}^j$ evolves under\begin{eqnarray}\label{evolve h}
\p_t \hat{h}_{i}^j=- f\hat{h}_i^k\hat{h}_{k}^j-\hat{\nabla}^j \hat{\nabla}_i f- \hat{g}^{jk}A_{pik}\hat{\nabla}^p f;\end{eqnarray}
\item[(v)] $H_F$ evolves under \begin{eqnarray}\label{evolve f}
\p_t H_F- \frac{1}{H_F^2}\left(\hat{\Delta} H_F+\hat{g}^{ik}A_{pik}\hat{\nabla}^p H_F\right)=-2\frac{|\hat{\nabla} H_F|^2_{\hat{g}}}{H_F^3}- \frac{1}{H_F}|\hat{h}|_{\hat{g}}^2;\end{eqnarray}
\item[(vi)]  $u$ evolves under
\begin{eqnarray}\label{evolve u}
\p_t \hat{u}-\frac{1}{H_F^2}\left(\hat{\Delta} \hat{u}+\hat{g}^{ik}A_{pik}\hat{\nabla}^p \hat{u}\right)= \frac{1}{H_F^2}|\hat{h}|_{\hat{g}}^2\hat{u}.
\end{eqnarray}
\end{enumerate}
\end{prop}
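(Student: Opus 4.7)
The strategy is to differentiate each defining relation in $t$ and extract the geometric content using the orthogonality properties $G(\nu_F)(\nu_F,\nu_F)=1$, $G(\nu_F)(\nu_F,\p_i X)=0$, together with the vanishing $Q(\nu_F)(\nu_F,\cdot,\cdot)=0$, combined with the Gauss--Weingarten formulas of Lemma \ref{lem2-1}. Throughout I view $X$ and $\nu_F$ as $\r^{n+1}$-valued functions on $M\times[0,T)$ and compute partial time derivatives componentwise, treating the flow literally as $\p_t X=f\nu_F$ with $f=1/H_F$.

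For (i), the identity $G(\nu_F)(\nu_F,\nu_F)=1$ is preserved in $t$, and its differentiation together with $Q(\nu_F)(\nu_F,\nu_F,\cdot)=0$ shows $G(\nu_F)(\p_t\nu_F,\nu_F)=0$, so $\p_t\nu_F$ is tangential in the $G$-sense. To identify it, differentiate $G(\nu_F)(\nu_F,\p_j X)=0$ in $t$, obtaining
$$
G(\nu_F)(\p_t\nu_F,\p_j X)=-G(\nu_F)(\nu_F,\p_j(f\nu_F))=-\p_j f,
$$
where the $Q$-terms vanish by the $\nu_F$-null property of $Q$ and by $G(\nu_F)(\nu_F,\p_j\nu_F)=\frac12\p_j G(\nu_F)(\nu_F,\nu_F)=0$. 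This gives $\p_t\nu_F=-\hat{\nabla}f$ as an element of $TM$. For (iii), I differentiate $\hat{g}_{ij}=G(\nu_F)(\p_i X,\p_j X)$ in $t$, substitute Weingarten $\p_i\nu_F=\hat{h}_i^k\p_k X$ for the $f\p_i\nu_F$ contribution, and pick up the $-Q_{ijk}\hat{\nabla}^k f$ term from the variation of $G$ in the base point $\nu_F$. Part (ii) is immediate from Proposition \ref{HL}: the Euclidean normal scalar speed of the flow is $\psi=f F(\nu)=F(\nu)/H_F$, hence $\p_t d\mu_F=H_F\psi\, d\mu_g=d\mu_F$.

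Part (iv) is the main technical step. Starting from $\hat{h}_{ij}=G(\nu_F)(\p_i\nu_F,\p_j X)$, I differentiate in $t$, commute $\p_t\p_i=\p_i\p_t$, and substitute $\p_t X=f\nu_F$ and $\p_t\nu_F=-\hat{\nabla}f$. The Gauss formula \eqref{Gauss} handles the arising $\p_i\p_k X$, while Weingarten \eqref{Weingarten} handles $\p_i\nu_F$; after regrouping, the $Q$ and $A$ contributions (with $A$ as in \eqref{AA}) assemble into the term $\hat{g}^{jk}A_{pik}\hat{\nabla}^p f$, alongside the expected $f\hat{h}_i^k\hat{h}_k^j$ and the covariant Hessian $\hat{\nabla}^j\hat{\nabla}_i f$. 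The index bookkeeping here is delicate because $A$ is symmetric only in its first two slots and $\hat{h}$ fails to be Codazzi. Part (v) then follows by contracting (iv) with $\delta_j^i$ (equivalently tracing $\hat{g}^{ij}\p_t\hat{h}_{ij}$ while compensating with $\p_t\hat{g}^{ij}$), substituting $f=1/H_F$, and using $\hat{\nabla}f=-\hat{\nabla}H_F/H_F^2$ and $\hat{\Delta}f=-\hat{\Delta}H_F/H_F^2+2|\hat{\nabla}H_F|_{\hat g}^2/H_F^3$.

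Part (vi) is the other delicate computation. Direct differentiation of $\hat{u}=G(\nu_F)(\nu_F,X)$ gives $\p_t\hat{u}=f-G(\nu_F)(\hat{\nabla}f,X)$, again using the $\nu_F$-null property of $Q$. To cast this into the operator form on the left of \eqref{evolve u}, I decompose $X=\tilde{X}^k\p_k X+\hat{u}\nu_F$ in the basis $\{\p_k X,\nu_F\}$. Differentiating the decomposition and comparing $\nu_F$-components using the Gauss formula yields $\hat{\nabla}_i\hat{u}=\hat{h}_{ij}\tilde{X}^j$, while comparing tangential components yields the ``anisotropic position identity''
$$
\hat{\nabla}_i\tilde{X}_j=\hat{g}_{ij}-\hat{u}\hat{h}_{ij}-\tilde{X}^{\ell}A_{i\ell j}.
$$
Computing $\hat{\nabla}_i\hat{\nabla}_j\hat{u}$ from these two relations, tracing with $\hat{g}^{ij}$, and then applying the Codazzi equation \eqref{Codazzi} to rewrite $\hat{g}^{ij}\hat{\nabla}_i\hat{h}_{jk}$ as $\hat{\nabla}_k H_F$ plus controlled $A$-terms, should produce exactly $\hat{\Delta}\hat{u}+\hat{g}^{ik}A_{pik}\hat{\nabla}^p\hat{u}$ on one side and $H_F-\hat{u}|\hat{h}|_{\hat g}^2$ on the other; dividing by $H_F^2$ and matching with $\p_t\hat{u}=f-\hat{\nabla}^k f\, G(\p_k X,X)$ yields (vi). The hard part throughout is that the non-metric tensor $Q$ and the non-Codazzi correction $A$ persistently appear and must cancel out into the clean form stated; only the precise structure of $A$ encoded in \eqref{AA}, together with the volume compatibility from Lemma \ref{volume}, guarantees this cancellation, and keeping the partial symmetries of $A$ straight is the principal obstacle.
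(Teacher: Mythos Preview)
Your overall strategy for (i), (iii), (iv), (v) matches the paper's almost exactly, and your route for (vi) via the decomposition $X=\tilde X^k\p_kX+\hat u\,\nu_F$ is a legitimate variant of the paper's direct computation of $\hat\nabla_i\hat u=\hat h_i^{\,p}G(\nu_F)(X_p,X)$ followed by $\hat\Delta\hat u$. One remark on (vi): the cancellation you anticipate does \emph{not} require Lemma~\ref{volume}; it comes purely from the algebraic identity $A_{rpq}+A_{pqr}+\hat h_p^{\,s}Q_{sqr}=0$, which follows directly from \eqref{AA}. The paper makes this explicit.

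There is, however, a genuine gap in your argument for (ii). Proposition~\ref{HL} yields an \emph{integral} identity $\frac{d}{dt}\int_M F(\nu)\,d\mu_g=\int_M H_F\psi\,d\mu_g$, obtained after an integration by parts; it does not give the pointwise formula $\p_t d\mu_F=H_F\psi\,d\mu_g$. Indeed, under the purely normal reparametrization $\p_t X=\psi\nu$ with $\psi=fF(\nu)$, one has $\p_t d\mu_F=\big(-\nabla^\ss_pF(\nu)\nabla_p\psi+F(\nu)H\psi\big)d\mu_g$, which differs from $H_F\psi\,d\mu_g$ by a divergence term. The paper instead works with the original flow $\p_tX=f\nu_F$ and uses the representation $d\mu_F=\Omega(\nu_F,\p_1X,\dots,\p_nX)\,dx^1\cdots dx^n$ with $\Omega$ the Euclidean volume form: since $\p_t\nu_F=-\hat\nabla f$ is tangential, the slot containing $\p_t\nu_F$ contributes nothing, and each slot $\p_i(\p_tX)=\p_i(f\nu_F)$ contributes $f\hat h_i^{\,i}$ via Weingarten, giving $\p_t d\mu_F=fH_F\,d\mu_F=d\mu_F$ pointwise. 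You should replace your appeal to Proposition~\ref{HL} by this argument.
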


\begin{proof}
In the proof we will frequently use the property that
\begin{eqnarray}\label{3Q}
Q(\nu_F)(\nu_F, V, W)=0, \quad  \quad V, W\in \r^{n+1}.
\end{eqnarray}
 
(i) Taking derivative of $G(\nu_F)(\nu_F,\nu_F)=1$ and  $G(\nu_F)(\nu_F, X_i)=0$ with respect to $t$ and using the Weingarten formula \eqref{Weingarten} and \eqref{3Q}, we have
$$0=\p_tG(\nu_F)(\nu_F,\nu_F)=2G(\nu_F)(\p_t\nu_F,\nu_F)+Q(\nu_F)(\p_t \nu_F, \nu_F,\nu_F);$$
\begin{eqnarray*}
0&=&\p_tG(\nu_F)(\nu_F,X_i)\\&=&G(\nu_F)(\p_t\nu_F,X_i)+G(\nu_F)(\nu_F,\p_i(\p_t X))+Q(\nu_F)(\p_t \nu_F, \nu_F,X_i)\\
&=&G(\nu_F)(\p_t\nu_F,X_i)+G(\nu_F)(\nu_F,\p_i f\nu_F+f\p_i \nu_F)\\
&=&G(\nu_F)(\p_t\nu_F,X_i)+\p_i f.
\end{eqnarray*}
Thus $$\p_t \nu_F= -\hat{\nabla} f.$$

(ii) Let $\Omega$ be the Lebesgue volume form in $\r^{n+1}$. Then the area element $d\mu_g$ of $(M,g)$  can be interpreted in the local coordinates as
$$d\mu_g=\Omega(\nu,\p_1 X, \cdots, \p_n X) dx^1\cdots dx^n.$$
Hence $$d\mu_F=F(\nu)d\mu_g=\Omega(\nu_F,\p_1 X, \cdots, \p_n X) dx^1\cdots dx^n.$$
It follows from \eqref{evolve nu} and \eqref{Weingarten} that
\begin{eqnarray*}
\p_t d\mu_F&=&\left[\Omega(\p_t\nu_F,\p_1 X, \cdots, \p_n X) +\sum_{i=1}^n\Omega(\nu_F,\p_1 X, \cdots, \p_i(\p_t X),\cdots,\p_n X) \right]dx^1\cdots dx^n\\&=&\sum_{i=1}^n\Omega(\nu_F,\p_1 X, \cdots, \p_i(f\nu_F),\cdots,\p_n X) dx^1\cdots dx^n\\&=&f \hat{h}_i^i \Omega(\nu_F,\p_1 X, \cdots, \p_n X) dx^1\cdots dx^n\\&=&H_F fd\mu_F=d\mu_F.
\end{eqnarray*}

(iii)-(iv):Using the Gauss-Weingarten formula \eqref{Gauss}, \eqref{Weingarten} and \eqref{evolve nu}, 
we directly compute
\begin{eqnarray*}
\p_t \hat{g}_{ij}&=& \p_t G(\nu_F)(X_i,X_j)\\&=&G(\nu_F)(\p_i f\nu_F+f\p_i \nu_F,X_j)+G(\nu_F)(\p_j f\nu_F+f\p_j\nu_F,X_i)+Q(\nu_F)(\p_t \nu_F, X_i,X_j)\\
&=&fG(\nu_F)(\hat{h}_i^kX_k,X_j)+fG(\nu_F)(\hat{h}_j^kX_k,X_i)+Q(\nu_F)(-\hat{\nabla}^pfX_p, X_i,X_j)\\
&=& 2f \hat{h}_{ij}-Q_{ijp}\hat{\nabla}^pf;
\end{eqnarray*}
\begin{eqnarray*}
\p_t \hat{h}_{ij}&=& \p_t G(\nu_F)(\p_i X,\p_j \nu_F)
\\&=&G(\nu_F)(\hat{\nabla}_i (f\nu_F), \p_j \nu_F)+G(\nu_F)(\p_i X,-\hat{\nabla}_j (\hat{\nabla}^p fX_p))+Q(\nu_F)(\p_iX, \p_j \nu_F, -\hat{\nabla}^p f X_p)\\
&=&f\hat{h}_i^k\hat{h}_{jk}-\hat{\nabla}_j \hat{\nabla}_i f- A_{jpi}\hat{\nabla}^p f -Q_{ilp}\hat{h}_{j}^l \hat{\nabla}^p f 
\end{eqnarray*}

Thus 
\begin{eqnarray*}
\p_t \hat{h}_{i}^j&=& \p_t \hat{g}^{jk}\hat{h}_{ik}+\hat{g}^{jk}\p_t \hat{h}_{ik}
\\&=&- f\hat{h}_i^k\hat{h}_{k}^j-\hat{\nabla}^j \hat{\nabla}_i f+ \hat{g}^{jr}\hat{g}^{ks} \hat{h}_{ik}Q_{rsp}\hat{\nabla}^p f- \hat{g}^{jk}( A_{kpi}+Q_{ilp}\hat{h}_{k}^l )\hat{\nabla}^p f\\
&=&- f\hat{h}_i^k\hat{h}_{k}^j-\hat{\nabla}^j \hat{\nabla}_i f- \hat{g}^{jk}A_{pik}\hat{\nabla}^p f.
\end{eqnarray*}
In the last inequality we used \eqref{AA} to do the computation 
\begin{eqnarray*}
&&\hat{g}^{jr}\hat{g}^{ks}\hat{h}_{ik}Q_{rsp} - \hat{g}^{jk}( A_{kpi}+Q_{ilp}\hat{h}_{k}^l )
\\&=&\hat{g}^{jr} \hat{h}_{i}^sQ_{rsp} +\frac12 \hat{g}^{jk}(\hat{h}_k^q Q_{qpi}+\hat{h}_p^q Q_{kqi}-\hat{h}_i^qQ_{kpq})- \hat{g}^{jk}Q_{ilp}\hat{h}_{k}^l 
\\&=&\frac12\hat{g}^{jr} \hat{h}_{i}^sQ_{rsp} -\frac12 \hat{g}^{jk}\hat{h}_k^q Q_{qpi}+\frac12\hat{g}^{jk}\hat{h}_p^q Q_{kqi}\\&=&  -\hat{g}^{jk}A_{pik}.
\end{eqnarray*}

(v) Equation \eqref{evolve f} follows by taking trace of \eqref{evolve h}. 

(vi) Using \eqref{evolve nu} and \eqref{3Q}, we have
\begin{eqnarray*}
\p_t \hat{u}&=&\p_t G(\nu_F)(\nu_F,X)\\&=&G(\nu_F)(-\hat{\nabla} f,X)+G(\nu_F)(\nu_F, f\nu_F)+Q(\nu_F)(\p_t\nu_F,\nu_F,X)\\&=&- \hat{\nabla}^k fG(\nu_F)(X,X_k)+f.\end{eqnarray*}
Using the Weingarten formula \eqref{Weingarten} and \eqref{3Q}, we have
\begin{eqnarray*}
\hat{\nabla}_i\hat{u}&=& \hat{\nabla}_i G(\nu_F)(\nu_F,X)\\&=& G(\nu_F)(\hat{\nabla}_i\nu_F,X)+G(\nu_F)(\nu_F,X_i)+Q(\nu_F)(\hat{\nabla}_i\nu_F,\nu_F,X)
\\&=& \hat{h}_i^pG(\nu_F)( X_p,X).
\end{eqnarray*}
Using also the anisotropic Codazzi formula \eqref{Codazzi}, we have
\begin{eqnarray*}
&&\hat{\Delta} \hat{u} + \hat{g}^{ik}A_{pik}\hat{\nabla}^p\hat{u}\\&=&\hat{\nabla}^i [\hat{h}_i^pG(\nu_F)( X_p,X)]+ \hat{g}^{ik}A_{pik} \hat{h}^{pm}G(\nu_F)( X_m,X)\\&=&  \hat{\nabla}^i \hat{h}_i^pG(\nu_F)(X_p,X)+\hat{g}^{ik}A_{pik} \hat{h}^{pm}G(\nu_F)( X_m,X)\\&&+ \hat{h}_i^p\left[G(\nu_F) (-\hat{h}_p^i\nu_F+\hat{g}^{iq}\hat{g}^{rm}A_{pqr}X_m,X)+\delta_p^i+ Q(\nu_F)(\hat{h}^{iq} X_q,X_p,X)\right]\\&=&\left[\hat{\nabla}^p \hat{h}_i^i+\hat{g}^{ps}\hat{h}^{ir}A_{sri}-\hat{g}^{ir}\hat{h}^{ps}A_{rsi}\right]G(\nu_F)(X_p,X)-|\hat{h}|_{\hat{g}}^2\hat{u}+H_F\\&&+ \left[\hat{g}^{iq}\hat{g}^{mr}\hat{h}_i^pA_{pqr}+\hat{g}^{mr}\hat{h}_i^p\hat{h}^{iq}Q_{pqr}+\hat{g}^{ik}A_{pik} \hat{h}^{pm}\right]G(\nu_F)(X_m,X)\\
&=&\hat{\nabla}^p H_FG(\nu_F)(X_p,X)-|\hat{h}|_{\hat{g}}^2\hat{u}+H_F\\&&+ \left[A_{rpq}+A_{pqr}+\hat{h}_{p}^sQ_{sqr}\right] \hat{g}^{mr}\hat{h}^{pq}G(\nu_F)(X_m,X).\end{eqnarray*}
A direct computation using \eqref{AA} shows that 
$$A_{rpq}+A_{pqr}+\hat{h}_{p}^sQ_{sqr}=0.$$
Therefore, 
\begin{eqnarray*}
&&\hat{\Delta} \hat{u} + \hat{g}^{ik}A_{pik}\hat{\nabla}^p\hat{u}=\hat{\nabla}^p H_FG(\nu_F)(X_p,X)-|\hat{h}|_{\hat{g}}^2\hat{u}+H_F.
\end{eqnarray*}
It follows that
\begin{eqnarray*}
&&\p_t \hat{u}-\frac{1}{H_F^2}\left(\hat{\Delta} \hat{u} + \hat{g}^{ik}A_{pik}\hat{\nabla}^p\hat{u}\right)=\frac{1}{H_F^2}|\hat{h}|_{\hat{g}}^2\hat{u}.
\end{eqnarray*}

\end{proof}

\begin{remark}
We can reprove Proposition \ref{HL} in an alternative way by using Proposition \ref{evolv}.
Indeed, formula \eqref{var0} follows directly from \eqref{evolve mu}. Using \eqref{evolve h} and Lemma \ref{volume}, we see easily
\begin{eqnarray*}
\frac{d}{dt}\int_M H_Fd\mu_F&=&\int_M (- f|\hat{h}|_{\hat{g}}^2-\hat{\Delta} f- \hat{g}^{ik}A_{pik}\hat{\nabla}^p f)+H_F^2 fd\mu_F\\&=&\int_M 2\sigma_2(\k^F)fd\mu_F.
\end{eqnarray*}
\end{remark}

\

\

We are now in a position to prove the a priori estimates for the flow \eqref{iamcf}. Let $$\tilde{X}(\cdot,t)=e^{-\frac1n t}X(\cdot,t),$$  the rescaled hypersurfaces. In the following we use $\;\tilde{}\;$ to indicate the related geometric quantity of $\tilde{X}$.

The a priori bound for the graph function $\rho(\cdot, t)$ follows by comparing with the homothetic solutions.
\begin{prop}\label{C0}
There exist two positive constants $r$ and $R$, depending only on $X_0$, such that 
\begin{eqnarray*}
re^{\frac1n t}\leq |X(\cdot,t)|\leq Re^{\frac1n t} \hbox{ or }r\leq |\tilde{X}(\cdot,t)|\leq R.
\end{eqnarray*}

\end{prop}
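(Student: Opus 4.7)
The plan is to compare $X(\cdot,t)$ with the explicit homothetic solutions to the IAMCF given by scaled Wulff shapes. Parametrizing $c\w$ by $c\phi(x)$ with $\phi(x) = F(x)x + \nabla^\ss F(x)$, the $1$-homogeneity of $F$ yields $\nu_F(c\phi(x)) = \phi(x)$, and since $\w$ is anisotropically umbilical with $H_F = n$ (noted just after Lemma \ref{lem2-1}) and curvature scales inversely with length, $H_F = n/c$ on $c\w$. Substituting into \eqref{iamcf} reduces to the ODE $\dot c = c/n$, so that $c(t) = c_0 e^{t/n}$ gives a one-parameter family of homothetic solutions $c_0 e^{t/n}\w$.

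Write $X_0 = \{\rho_0(x)x : x\in\ss^n\}$ and $\w = \{\rho_\w(x)x : x\in\ss^n\}$, where $\rho_0, \rho_\w \in C^\infty(\ss^n)$ are positive. By compactness of $\ss^n$, the ratio $\rho_0/\rho_\w$ attains a positive minimum $c_- > 0$ and a finite maximum $c_+ < \infty$, so the initial hypersurface $X_0$ is sandwiched between $c_-\w$ and $c_+\w$.

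The central step is to apply the parabolic comparison principle to the scalar fully nonlinear equation \eqref{g}. Both $\rho(x,t)$ and the barriers $\rho_\pm(x,t) := c_\pm \rho_\w(x) e^{t/n}$ are smooth solutions of \eqref{g} on $\ss^n \times [0,T)$, and on the range of gradients and Hessians occurring along these solutions the operator in \eqref{g} is strictly parabolic, its linearization in $\nabla^\ss\nabla^\ss \g$ having positive-definite coefficient matrix coming from $A_F > 0$ and from $H_F > 0$. Subtracting the equations for $\rho$ and $\rho_+$, the difference $w := \rho - \rho_+$ satisfies a linear parabolic inequality by the mean-value theorem; since $w(\cdot,0) \leq 0$, the maximum principle gives $\rho(x,t) \leq \rho_+(x,t)$ throughout $[0,T)$, and the analogous argument yields $\rho(x,t) \geq \rho_-(x,t)$. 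Setting $r := c_- \min_{\ss^n}\rho_\w$ and $R := c_+ \max_{\ss^n}\rho_\w$ and using $|X(\cdot,t)| = \rho(x,t)$ delivers the stated bounds.

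The main technical issue is verifying that the standard scalar nonlinear parabolic comparison applies to \eqref{g}, whose right-hand side depends on $\nabla^\ss\g$ through $\nu$ in a genuinely nonlinear way. In the isotropic setting of Gerhardt--Urbas the barriers are spheres whose graph functions are constants, so the maximum principle is particularly transparent; here $\rho_\w$ is nonconstant and the barriers are the nontrivial time-dependent graphs $\rho_\pm$. The needed uniform strict parabolicity on the a priori sandwich region $\{c_-\rho_\w \leq \rho \leq c_+\rho_\w\}$ will follow from positive definiteness of $A_F$ together with preservation of strict $F$-mean convexity along the flow, the latter being a consequence of the maximum principle applied to \eqref{evolve f}.
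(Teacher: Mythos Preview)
Your proof is correct and follows essentially the same approach as the paper: both sandwich $X_0$ between two scaled Wulff shapes, observe that $c_0 e^{t/n}\w$ are explicit homothetic solutions of \eqref{iamcf}, and invoke the parabolic comparison principle. The paper states the comparison argument in one line at the hypersurface level, whereas you spell it out via the scalar graph equation \eqref{g}; your final paragraph worrying about uniform parabolicity along the mean-value interpolation is more than is needed, since the standard first-touching-point argument only requires ellipticity at the contact point (where gradients agree and $H_F>0$ holds for both the solution and the Wulff barrier), so no appeal to \eqref{evolve f} is necessary for this step.
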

\begin{proof}
Since $X_0$ is star-shaped and closed, we can find $r$ and $R$ such that $$r\w\subset X_0\subset R\w.$$ Since the anisotropic mean curvature of the hypersurface $\w$ is the constant $n$, and $\nu_F(\w)$ is the same as its position vector, we know the flow starting from $\w$ is homothetical. Hence one can solve explicitly the solution of  the flow starting from $r\w$ ($RW$ resp. ) as $r(t)\w$ ($R(t)\w$ resp.), where $r(t)=re^{\frac1n t}$ and $R(t)=Re^{\frac1n t}$. Since the flow is parabolic, by the comparison principle, we have $X(\cdot, t)$ is bounded by $r(t)\w$ from below and by $R(t)\w$ from above.
\end{proof}

We then prove the $C^1$ estimate. 
\begin{prop}\label{C1}
There exists some constant  $C$, depending on $F, r, R$ and $\|\nabla^\ss \g(\cdot, 0)\|$, such that 
\begin{eqnarray*}
|\nabla^\ss \g|(x,t)\leq C.
\end{eqnarray*}
\end{prop}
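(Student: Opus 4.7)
The plan is to bound the anisotropic support function $\hat{u}$ from below by a positive multiple of $|X|$; together with the identity $\hat{u}=u/F(\nu)$ and $u/\rho = 1/\sqrt{1+|\nabla^\ss\gamma|^2}$, this converts immediately into the desired gradient bound on $\gamma$. The reason this approach should succeed, despite the author's remark that the direct estimate for $|\nabla^\ss\gamma|^2$ behaves badly in the anisotropic setting, is that the evolution equation \eqref{evolve u} for $\hat{u}$ is structurally identical to its isotropic counterpart and the new metric $\hat{g}$ absorbs the complications caused by the tensor $A$.

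Cauchy--Schwarz on the eigenvalues of $\hat{h}^j_i$ gives $|\hat{h}|_{\hat{g}}^2 \geq H_F^2/n$, so \eqref{evolve u} implies
$$\p_t \hat{u} - \frac{1}{H_F^2}\bigl(\hat{\Delta}\hat{u} + \hat{g}^{ik}A_{pik}\hat{\nabla}^p\hat{u}\bigr) \geq \frac{\hat{u}}{n}.$$
Setting $w := e^{-t/n}\hat{u}$, a direct substitution turns this into
$$\p_t w - \frac{1}{H_F^2}\bigl(\hat{\Delta}w + \hat{g}^{ik}A_{pik}\hat{\nabla}^p w\bigr) \geq 0.$$
At any interior spatial minimum of $w$, $\hat{\nabla}w = 0$ and $\hat{\Delta}w \geq 0$, so the drift term $\hat{g}^{ik}A_{pik}\hat{\nabla}^p w$ vanishes and the parabolic minimum principle on the closed manifold $M$ applies. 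Hence $\min_M w(\cdot,t)$ is non-decreasing, which gives $\hat{u}(x,t) \geq m_0 e^{t/n}$ for the positive constant $m_0 := \min_M \hat{u}(\cdot,0)$. The star-shapedness of $X_0$ ensures $m_0>0$, with the quantitative bound $m_0 \geq r/\bigl(\max_{\ss^n} F \cdot \sqrt{1+\|\nabla^\ss\gamma(\cdot,0)\|^2}\bigr)$ coming from $\hat u = u/F(\nu)$ and $u(\cdot,0) = \rho(\cdot,0)/\sqrt{1+|\nabla^\ss\gamma(\cdot,0)|^2}$.

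Combining this lower bound with the upper bound $|X(x,t)| \leq R\, e^{t/n}$ from Proposition \ref{C0} yields $\hat{u}/|X| \geq m_0/R$ uniformly in $t$. Using \eqref{relation u2} (equivalently, the boundedness of $F$ on $\ss^n$), we get $u/\rho \geq c > 0$ for a positive constant $c$ depending only on $F$, $r$, $R$ and $\|\nabla^\ss\gamma(\cdot,0)\|$. Since $u/\rho = 1/\sqrt{1+|\nabla^\ss\gamma|^2}$, this gives $|\nabla^\ss\gamma| \leq \sqrt{c^{-2}-1}$, as required. No serious obstacle arises: the only conceptual point is that the first-order term in the anisotropic elliptic operator does not interfere with the minimum principle, which is immediate because it is purely first-order; the whole argument avoids the troublesome evolution of $|\nabla^\ss\gamma|^2$ by working with the geometric quantity $\hat{u}$, exactly as the introduction of $\hat{g}$ is designed to enable.
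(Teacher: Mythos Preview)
Your proposal is correct and follows essentially the same route as the paper: both arguments apply the minimum principle to the rescaled anisotropic support function $e^{-t/n}\hat u$ via the evolution equation \eqref{evolve u} together with $|\hat h|_{\hat g}^2\ge H_F^2/n$, and then convert the resulting lower bound on $\hat u$ into a gradient bound for $\gamma$ through the identity $u=\rho/\sqrt{1+|\nabla^\ss\gamma|^2}$ and the comparability \eqref{relation u2} of $u$ and $\hat u$. The only cosmetic difference is that you phrase the final step as bounding $\hat u/|X|$ while the paper bounds $u$ directly; the content is identical.
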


\begin{proof}
As we mentioned before, the evolution equation for $|\nabla^\ss \g|^2$ does not behave  well. 
We will use the evolution of $\hat{u}$.
In fact, we utilize $$\tilde{\hat{u}}=e^{-\frac1nt}\hat{u},$$ the anisotropic support function of the rescaled hypersurface $\tilde{X}=e^{-\frac1nt}X$. 

It follows from \eqref{evolve u} that
\begin{eqnarray}\label{scale u}
\p_t \tilde{\hat{u}}-\frac{1}{H_F^2}\left(\hat{\Delta} \tilde{\hat{u}}+\hat{g}^{ik}A_{pik}\hat{\nabla}^p \tilde{\hat{u}}\right)= \left(\frac{1}{H_F^2}|\hat{h}|_{\hat{g}}^2-\frac1n\right)\tilde{\hat{u}}.
\end{eqnarray}
The elementary Cauchy-Schwarz inequality tells that
\begin{eqnarray*}
\frac{1}{H_F^2}|\hat{h}|_{\hat{g}}^2-\frac1n\geq 0.
\end{eqnarray*}
Using the maximum principle on \eqref{scale u}, we see
\begin{eqnarray*}
\tilde{\hat{u}}(\cdot, t)\geq \min \tilde{\hat{u}}(\cdot,0)=\min \hat{u}(\cdot,0).
\end{eqnarray*}
which implies 
\begin{eqnarray*}
\hat{u}(\cdot, t)\geq e^{\frac1n t}\min \hat{u}(\cdot,0).
\end{eqnarray*}
In view of \eqref{relation u2}, we know that
 \begin{eqnarray*}
u(\cdot, t)\geq \frac{\l}{\L}e^{\frac1n t}\min u(\cdot,0).
\end{eqnarray*}
Since $u=\frac{\rho}{\sqrt{1+|\nabla^\ss \g|^2}}$, combining with the $C^0$ estimate we have
 \begin{eqnarray*}
|\nabla^\ss \g|(\cdot, t)\leq C,
\end{eqnarray*}
where $C$ depends on $F, r, R$ and $\|\nabla^\ss \g(\cdot, 0)\|.$ 
\end{proof}

Next we show the anisotropic mean curvature is uniformly bounded for $\tilde{X}(\cdot, t)$.

\begin{prop}\label{HF}
There exists some constant  $C$, depending on $F$   and the initial data of $X_0$, such that 
\begin{eqnarray}
\frac1C\leq \tilde{H}_F\leq C.
\end{eqnarray}
\end{prop}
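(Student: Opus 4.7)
The plan is to apply the parabolic maximum principle to two different scalar quantities; since $\tilde H_F = e^{t/n} H_F$, what needs to be shown is $\max H_F(\cdot,t) \le C e^{-t/n}$ and $\min H_F(\cdot,t) \ge c\,e^{-t/n}$.

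For the upper bound I would work directly with the evolution equation \eqref{evolve f} and use the Cauchy--Schwarz inequality $|\hat h|_{\hat g}^2 \ge H_F^2/n$ applied to the $n$ anisotropic principal curvatures. At a spatial maximum of $H_F$ the first-order drift in $\hat L := \hat\Delta + \hat g^{ik}A_{pik}\hat\nabla^p$ vanishes and $\hat L H_F = \hat\Delta H_F \le 0$, so \eqref{evolve f} reduces to $\partial_t H_F \le -H_F/n$. Parabolic comparison then yields $\max_M H_F(\cdot,t) \le (\max_M H_F(\cdot,0))\,e^{-t/n}$, which gives $\tilde H_F \le C$.

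The lower bound is more delicate since at a minimum of $H_F$ the evolution \eqref{evolve f} only gives $\partial_t H_F \ge -|\hat h|^2/H_F$, and the right-hand side does not close. Instead I would introduce the auxiliary quantity $\psi := H_F\,\hat u$ and combine \eqref{evolve f} with \eqref{evolve u}. Two cancellations occur: the reaction terms $\mp|\hat h|_{\hat g}^2\hat u/H_F$ from the two evolutions cancel, while the mixed gradient term produced by the product rule $\hat L(H_F\hat u) = \hat u\,\hat L H_F + H_F\,\hat L\hat u + 2\hat g^{ij}\hat\nabla_i H_F\,\hat\nabla_j\hat u$ combines with the spare $-2\hat u|\hat\nabla H_F|^2/H_F^3$ into a single drift in $\hat\nabla\psi$. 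The net evolution should be the linear parabolic equation
\begin{equation*}
\partial_t \psi = \frac{1}{H_F^2}\hat L\psi - \frac{2}{H_F^3}\,\hat g^{ij}\hat\nabla_i H_F\,\hat\nabla_j\psi,
\end{equation*}
which has no zeroth-order term, and the parabolic minimum principle then gives $\min_M\psi(\cdot,t) \ge \min_M\psi(\cdot,0) =: c_0 > 0$; positivity at $t=0$ follows from strict $F$-mean-convexity together with $\hat u > 0$, which in turn follows from star-shapedness via \eqref{relation u}.

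To convert this into a pointwise lower bound on $H_F$ I would use $\hat u = u/F(\nu) \le |X|/\min_{\ss^n} F$ from \eqref{relation u} together with the $C^0$ estimate $|X|\le Re^{t/n}$ of Proposition \ref{C0}, obtaining $\hat u \le C e^{t/n}$ and hence $H_F \ge c_0/\hat u \ge c_1 e^{-t/n}$, i.e.\ $\tilde H_F \ge c_1$. The main obstacle I expect is verifying the algebraic cancellations that reduce the evolution of $\psi$ to a linear equation without zeroth-order term; the subsequent maximum-principle arguments and $C^0$ comparison are then routine.
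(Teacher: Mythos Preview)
Your argument is correct. The evolution equation you derive for $\psi = H_F\hat u$ is exactly the paper's equation \eqref{fu0}, so the cancellations you anticipate do go through and the minimum principle applies as you describe.

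The paper's route differs from yours in one respect. Rather than treating the upper and lower bounds separately, the paper uses the single scaling-invariant quantity $P = H_F\hat u = \tilde H_F\tilde{\hat u}$ for \emph{both} directions: the maximum principle on \eqref{fu0} gives $\min P(\cdot,0)\le P(\cdot,t)\le \max P(\cdot,0)$, and then two-sided bounds on $\tilde{\hat u}$ (coming from Proposition~\ref{C1}) convert this to two-sided bounds on $\tilde H_F$. Your upper-bound argument --- applying the maximum principle directly to \eqref{evolve f} with $|\hat h|_{\hat g}^2\ge H_F^2/n$ --- is more elementary in that it needs no information about $\hat u$ at all, and in particular does not require the $C^1$ estimate; the paper's version needs the lower bound $\tilde{\hat u}\ge c$, which genuinely uses Proposition~\ref{C1}. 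For the lower bound your approach and the paper's essentially coincide (and, as you observe, only the $C^0$ estimate is needed to bound $\hat u$ from above). So your proof is a slight streamlining: it trades the uniformity of a single test function for independence from the gradient estimate.
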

\begin{proof}
From \eqref{evolve f} and \eqref{evolve u}, we have the following evolution equation 
\begin{eqnarray}\label{fu0}
\p_t \left(H_F\hat{u}\right)- \frac{1}{H_F^2}\left(\hat{\Delta} \left(H_F\hat{u}\right)+\hat{g}^{ik}A_{pik}\hat{\nabla}^p \left(H_F\hat{u}\right)\right)+\frac{2}{H_F^3} \hat{\nabla}^i H_F\hat{\nabla}_i\left(H_F\hat{u}\right)=0.\end{eqnarray}
It follows from the maximum principle that
\begin{eqnarray*}
\min H_F\hat{u}(\cdot,0)\leq H_F\hat{u}(\cdot,t)\leq \max H_F\hat{u}(\cdot,0).\end{eqnarray*}
$H_F\hat{u}$ is scaling invariant, so 
\begin{eqnarray*}
\min \tilde{H}_F\tilde{\hat{u}}(\cdot,0)\leq \tilde{H}_F\tilde{\hat{u}}(\cdot,t)\leq \max \tilde{H}_F\tilde{\hat{u}}(\cdot,0).\end{eqnarray*}
The assertion now follows from Proposition \ref{C1}.
\end{proof}

In view of Proposition \ref{C0}--\ref{HF}, we see that $\tilde{\rho}$, $\tilde{u}$ and $\tilde{H}_F$ are uniformly bounded from above and below by positive constants. Therefore, we see readily that equation \eqref{g} is uniformly parabolic.
However, because the equation \eqref{g} is fully nonlinear, we still need the $C^2$ estimate of $\tilde{\rho}$. 

It is quite hard to use the evolution equation for $\hat{h}_{i}^j$, because the anisotropy brings technical difficulties. Here we realize that the anisotropic mean curvature is itself a quasilinear operator and we utilize several estimates from the theory of quasilinear elliptic or parabolic equations.
In the following we denote by $C^{k,\alpha}$ the spatial H\"older space and $\tilde{C}^{k,\alpha}$ the space-time H\"older space, see e.g. \cite{Ur} page 361.

If we write $\tilde{\gamma}=\log \tilde{\rho}$, then $\tilde{H}_F$ can be expressed in terms of $\tilde{\g}$: 
\begin{eqnarray*}
\tilde{H}_F=\frac{1}{\tilde{\rho}\sqrt{1+|\nabla^\ss \tilde{\g}|^2}} A_{ij}(\tilde{\nu})\left[\delta_{ij}-\left(\s^{ik}-\frac{\tilde{\g}^i\tilde{\g}^k}{1+|\nabla^\ss \tilde{\g}|^2}\right)\tilde{\g}_{jk}\right].
\end{eqnarray*}
Hence
\begin{eqnarray}\label{ellip}
A_{ij}(\tilde{\nu})\left(\s^{ik}-\frac{\tilde{\g}^i\tilde{\g}^k}{1+|\nabla^\ss \tilde{\g}|^2}\right)\tilde{\g}_{jk}
=\sum_i A_{ii}(\tilde{\nu})- \tilde{\rho}\sqrt{1+|\nabla^\ss \tilde{\g}|^2}\tilde{H}_F.
\end{eqnarray}
Since $|\nabla^\ss \tilde{\g}|$ and $\tilde{H}_F$ are uniformly bounded,  \eqref{ellip} is a uniformly elliptic equation.

Note that $$\tilde{\nu}=\frac{x-\nabla^\ss \tilde{\g}}{\sqrt{1+|\nabla^\ss \tilde{\g}|^2}}.$$

We write \eqref{ellip} as a general form of quasilinear equations:
\begin{eqnarray}\label{ellip1}
a_{ij}(x,\nabla^\ss \tilde{\g})\tilde{\g}_{ij}+b(x,\tilde{\g},\nabla^\ss\tilde{\g})=0.\end{eqnarray}
We note that $a_{ij}\in  C^1(M\times \mathbb{R}^n), b\in  C^0(M\times \mathbb{R}\times\mathbb{R}^n)$ and we have the structural condition for \eqref{ellip1}:
\begin{eqnarray*}
a_{ij}(x,\nabla^\ss \tilde{\g})\xi^i\xi^j\geq \l |\xi|^2>0,\quad \forall \xi\in \r^{n+1}\setminus\{0\},
\end{eqnarray*}
\begin{eqnarray*}
|a_{ij}(x,p)|+|D_{x_k} a_{ij}(x,p)|+|D_{p_k} a_{ij}(x,p)|+|b(x,z,p)|\leq \L,
\end{eqnarray*}
where $\l$ and $\L$ depend only on $\|\tilde{\g}\|_{C^1}$.
It follows from \cite{GT}, Chapter 13, Theorem 13.6 that $\|\nabla^\ss \tilde{\g}\|_{C^\a}\leq C.$  In turn, $\tilde{\hat{u}}$ has a $C^\alpha$ bound in $x$.

\

Next we show that $\|\tilde{H}_F\|_{C^\b}\leq C$ for some $\b\in (0,1).$
In order to prove this, we look at the equation for $$P:=H_F \hat{u}.$$ We recall from \eqref{fu0} that $P$ satisfies
\begin{eqnarray}\label{22}
\p_t P- \frac{1}{H_F^2}\left(\hat{\Delta} P+\hat{g}^{ik}A_{pik}\hat{\nabla}^p P\right)+\frac{2}{H_F^3} \hat{\nabla}^i H_F\hat{\nabla}_i P=0.\end{eqnarray}
The key observation is that equation \eqref{22} is a quasilinear parabolic equation of divergence form  on the weighted manifold $(M, \hat{g}, d\mu_F=\varphi d\mu_{\hat{g}}).$
We will use the classical parabolic PDE theory (\cite{LSU}) to prove the H\"older continuity of $P$.

Let $\zeta\in C_c^\infty(B_\rho\times [0,T))$ be some cut-off function with values in $[0,1]$ in some small ball $B_\rho\subset M$. Multiplying equation \eqref{22} with $\zeta^2P$, integrating by parts over $X(\cdot, t)\times [t_0,t]$ for any $[t_0,t]\subset [0,T)$ and using Lemma \ref{volume}, we obtain
 \begin{eqnarray}\label{33}
&&\int_{t_0}^{t} \frac{d}{dt} \int_{B_\rho} \frac12\zeta^2 P^2 d\tilde{\mu}_F dt -\int_{t_0}^{t}  \int_{B_\rho} \zeta\p_t\zeta P^2 d\tilde{\mu}_F dt\\&=&\int_{t_0}^{t} \int_{B_\rho} \p_t P\cdot\zeta^2Pd\tilde{\mu}_F dt\nonumber\\&=& \int_{t_0}^{t}\int_{B_\rho} \frac{\zeta^2P}{H_F^2}\left(\hat{\Delta} P+\hat{g}^{ik}A_{pk i}\hat{\nabla}^p P\right)-\frac{2\zeta^2P}{H_F^3} \hat{\nabla}^i H_F\hat{\nabla}_i P d\tilde{\mu}_F dt\nonumber
\\&=& \int_{t_0}^{t}\int_{B_\rho} -\frac{\zeta^2}{{H}_F^2}|\hat{\nabla} P|_{\hat{g}}^2 + \frac{2\zeta P}{{H}_F^2}\hat{\nabla} ^i\zeta\hat{\nabla} _i Pd\tilde{\mu}_F dt. \nonumber
\end{eqnarray}
In the first equality we also used 
\begin{eqnarray*}
\p_t d\tilde{\mu}_F =\p_t (e^{-t}d{\mu}_F)= e^{-t}(\p_t d\mu_F-d\mu_F)=0.
\end{eqnarray*}

By using the H\"older inequality in \eqref{33}, we have
 \begin{eqnarray}\label{44}
&&\int_{B_\rho} \frac12P^2\zeta^2d\tilde{\mu}_F \bigg|_{t_0}^{t} +\int_{t_0}^{t}\int_{B_\rho} \frac{\zeta^2}{2H_F^2}|\hat{\nabla} P|_{\hat{g}}^2 d\tilde{\mu}_F dt
\\&\leq &\int_{t_0}^{t} \int_{B_\rho}  \frac{1}{{H}_F^2}|\hat{\nabla} \zeta|_{\hat{g}}^2P^2+ |\zeta||\p_t\zeta| P^2 d\tilde{\mu}_F dt.\nonumber
\end{eqnarray}

Note that
\begin{eqnarray*}
\tilde{\hat{g}}_{ij}&=&G(\tilde{\nu}_F)(\tilde{X}_i,\tilde{X}_j)=\frac{\p^2 \frac12(F^0)^2}{\p \xi^\a\p \xi^\b}(\nu_F)\tilde{X}_i^\a\tilde{X}_j^\b.\end{eqnarray*}
Because $F^0$ is a Minkowski norm, there exists a constant $C$, depending only on $F$, such that
\begin{eqnarray*}
\frac1C\<\tilde{X}_i, \tilde{X}_j\> \leq\tilde{\hat{g}}_{ij}\leq C\<\tilde{X}_i, \tilde{X}_j\>.\end{eqnarray*}
On the other hand, due to the $C^1$ estimate,
\begin{eqnarray*}
\frac1C\sigma_{ij}\leq \<\tilde{X}_i, \tilde{X}_j\>\leq C\sigma_{ij}.\end{eqnarray*}
Hence $\tilde{\hat{g}}_{ij}$ and $d\tilde{\mu}_F$ is uniformly bounded.
Also from Proposition \ref{HF}, $\tilde{H}_F$ is uniformly bounded.  We find that estimate \eqref{44} is in a similar behavior as \cite{LSU}, Chapter V, (1.13). From the argument after (1.13) there,  locally  our quantity $P$ belongs to the space $\mathcal{B}_2$ in \cite{LSU}, Chapter II. Therefore, by \cite{LSU}, Chapter II, Theorem 8.1, we obtain that 
$$\|P\|_{\tilde{C}^{\g}}\leq C,$$ for some $\g\in (0,1)$. 
Particularly, since $$P=H_F \hat{u}=\tilde{H}_F \tilde{\hat{u}}$$ and $\tilde{\hat{u}}$ has a $C^\a$ bound in $x$, we conclude that $\tilde{H}_F$ has a $C^{\b}$ bound in $x$ for some $\b\in (0,1)$. 

We return to equation \eqref{ellip} and find that both the coefficient and the RHS have some H\"older continuous bound. It follows from the classical elliptic Schauder theory that 
$$|\tilde{\g}|_{C^{2,\alpha}(\ss^n\times [0,T))}\leq C\hbox{ for some }\a\in (0,1).$$

From  \eqref{g} we know $\p_t \tilde{\g}$ is uniformly bounded. Therefore
$$|\tilde{\g}|_{\tilde{C}^2(\ss^n\times [0,T))}\leq C.$$

Now we have an uniformly  parabolic and concave equation \eqref{g} for  scalar function $\tilde{\g}$ with the a priori $\tilde{C}^{2}$ bound (in space-time).  By standard fully non-linear parabolic PDE theory, we will have all the higher order a priori estimates and consequently the long time existence of the solution. Moreover, all the geometric quantities and their derivatives for $\tilde{X}$ are uniformly bounded.

\

It is left to show the convergence of the flow \eqref{iamcf}.

Let $\tilde{\k}^F(x,t)$ be the anisotropic principal curvatures of $\tilde{X}(x,t)$. We know from our a priori estimates that $\tilde{\k}^F(x,t)$ is uniformly bounded for all $t\in [0,+\infty)$.

Denote $$\mathcal{H}(t):=\int_M \tilde{H}_Fd\tilde{\mu}_F,\quad t\in [0,\infty).$$
We deduce from \eqref{var} that along the flow \eqref{iamcf},
\begin{eqnarray}\label{vv}
\frac{d}{dt}\mathcal{H}(t)&=&\frac{d}{dt}\left\{e^{\frac{1-n}{n}t}\int_M H_Fd\mu_F\right\}\\&=&e^{\frac{1-n}{n}t}\left(\int_M \frac{1-n}{n} H_F+2\sigma_2(\k^F) \frac{1}{H_F}d\mu_F\right)\nonumber\\&=& \int_M \left(\frac{2\sigma_2(\tilde{\k}^F)}{\tilde{H}_F}-\frac{n-1}{n}\tilde{H}_F\right)d\tilde{\mu}_F\nonumber\\&=& -\int_M \frac{1}{\tilde{H}_F} \left| \tilde{\hat{h}}_i^{j}-\frac{\tilde{H}_F}{n}\delta_{ij}\right|^2 d\tilde{\mu}_F\leq 0.\nonumber
\end{eqnarray}

Integrating \eqref{vv} over $[0,T]$,
\begin{eqnarray}\label{aa}
\mathcal{H}(0)-\mathcal{H}(T)=\int_0^T \int_M  \frac{1}{\tilde{H}_F} \left|\tilde{\hat{h}}_i^{j}-\frac{\tilde{H}_F}{n}\delta_{ij}\right|^2 d\tilde{\mu}_Fdt.
\end{eqnarray}
Since $\mathcal{H}(T)> 0$ for all $T<\infty$, we see from \eqref{aa} that
\begin{eqnarray}\label{a1}
0\leq \int_0^{\infty} \int_M   \frac{1}{\tilde{H}_F} \left|\tilde{\hat{h}}_i^{j}-\frac{\tilde{H}_F}{n}\delta_{ij}\right|^2 d\tilde{\mu}_Fdt\leq \mathcal{H}(0)\leq C.
\end{eqnarray}
The integrand in \eqref{a1} is uniformly continuous in $t$. Hence
\begin{eqnarray*}
\int_M  \left|\tilde{\hat{h}}_{i}^j-\frac{\tilde{H}_F}{n}\delta_{ij}\right|^2 d\tilde{\mu}_F\to 0\hbox{ as }t\to \infty .
\end{eqnarray*}

It follows from the regularity estimates and the interpolation theorem that 
\begin{eqnarray}\label{umbilic}
 \left|\tilde{\hat{h}}_{i}^j-\frac{\tilde{H}_F}{n}\delta_{ij}\right|^2 \to 0\hbox{ uniformly in } C^\infty\hbox{ as }t\to \infty.
\end{eqnarray}

On the other hand, from the anisotropic Codazzi formula \eqref{Codazzi}, we have
\begin{eqnarray*}
\hat{\nabla}_j \tilde{\hat{h}}_{i}^j=\hat{\nabla}_i \tilde{H}_F+\tilde{\hat{h}}^{jl}\tilde{A}_{lij}-\tilde{\hat{h}}_i^l \tilde{A}_{ljj}.
\end{eqnarray*}
Thus 
\begin{eqnarray}\label{x1}
|\hat{\nabla}_j\tilde{\hat{h}}_{i}^j-\hat{\nabla}_i \tilde{H}_F|_{\tilde{\hat{g}}}\leq C\sum_j |\tilde{\k}^F_i-\tilde{\k}^F_j|, \hbox{ for any }i.
\end{eqnarray}
We see from \eqref{umbilic} that 
\begin{eqnarray}\label{x2}
 |\hat{\nabla}_j \tilde{\hat{h}}_{i}^j-\frac1n\hat{\nabla}_i \tilde{H}_F|_{\tilde{\hat{g}}}\to 0\hbox{ uniformly as }t\to \infty \hbox{ for any }i, 
\end{eqnarray}
and 
\begin{eqnarray}\label{x3}
|\tilde{\k}^F_i-\tilde{\k}^F_j|\to 0\hbox{ uniformly as }t\to \infty \hbox{ for any }i\neq j.  
\end{eqnarray}
From \eqref{x1}-\eqref{x3} we deduce that 
\begin{eqnarray*}
|\hat{\nabla}\tilde{H}_F|_{\tilde{\hat{g}}}\to 0\hbox{ uniformly as }t\to \infty.
\end{eqnarray*}
It follows that
\begin{eqnarray}\label{Hconv}
\tilde{H}_F-n\k_0\to 0\hbox{ uniformly in } C^\infty\hbox{ as }t\to \infty.
\end{eqnarray}
with some positive constant $\k_0$.

We will show next $P:=H_F \hat{u}$ converges to a constant. Note that $P$ is scaling invariant. Denote by $$\mathcal{P}(t):=\int_M P d\tilde{\mu}_F.$$
Let us recall the evolution equation \eqref{fu0} for $P$:
\begin{eqnarray}\label{PP}
\p_t P- \frac{1}{H_F^2}\left(\hat{\Delta} P+\hat{g}^{ik}A_{pik}\hat{\nabla}^p P\right)+\frac{2}{H_F^3} \hat{\nabla}^i H_F\hat{\nabla}_i P=0.
\end{eqnarray}
Integrating by parts with respect to $d\tilde{\mu}_F$, we have
\begin{eqnarray*}
\frac{d}{dt} \mathcal{P}(t)=0.
\end{eqnarray*}
This means $\mathcal{P}(t)=\mathcal{P}^*$ is a constant.
On the other hand, multiplying  \eqref{PP} by $P$ and integrating by parts, we obtain
\begin{eqnarray*}
&&\p_t \int_M \frac12 |P(\cdot,t)-\mathcal{P}^*|^2  d\tilde{\mu}_F\\&=&\p_t \int_M \frac12 P(\cdot,t)^2-\frac12(\mathcal{P}^*)^2  d\tilde{\mu}_F\\&=& -\int_M \frac{1}{\tilde{H}_F^2}|\hat{\nabla} P|_{\tilde{\hat{g}}}^2 d\tilde{\mu}_F
\\&\leq &- C\int_M \frac12 |P(\cdot,t)-\mathcal{P}^*|^2  d\tilde{\mu}_F. \end{eqnarray*}
In the last inequality we used the boundedness of $\tilde{H}_F$ and the Poincar\'e inequality.


It follows that
\begin{eqnarray*}
&&\int_M \frac12 |P(\cdot,t)-\mathcal{P}^*|^2  d\tilde{\mu}_F\leq  Ce^{-Ct}. \end{eqnarray*}
The standard argument using the interpolation theorem, see e.g. \cite{Ur} page 371, yields that
\begin{eqnarray}\label{Pconv}
&&\|P(\cdot,t)-\mathcal{P}^*\|_{C^k}\leq  Ce^{-Ct}\hbox{ for any }k\ge 0. \end{eqnarray}

Combining \eqref{Hconv} and \eqref{Pconv},
we see that
\begin{eqnarray}\label{uconv}
\left\|\tilde{\hat{u}}(\cdot,t)- \frac{\mathcal{P}^*}{n\k_0}\right\|_{C^k}\to 0\hbox{ for any }k\ge 0.
\end{eqnarray}
Note that we do not have exponential convergence for $\tilde{H}_F$. We can not get exponential convergence of $\tilde{\hat{u}}$ from \eqref{Pconv}. From \eqref{Hconv} and \eqref{uconv}, it is clear that $\mathcal{P}^*=n$.

To show the exponential convergence, we shall write the flow equation as a scalar equation for the anisotropic support function on $\w$ for $t$ large. Because
for $t$ large enough, the evolved hypersurfaces are strictly convex, we can reparametrize  $X(\cdot, t): \w^n\to \mathbb{R}^{n+1}$ by its inverse anisotropic Gauss map $\nu_F^{-1}$. The anisotropic principal curvatures of $X$ are equal to the eigenvalues of the inverse of $$U_{ij}:=\hat{\nabla}^\w_i \hat{\nabla}^\w_j\hat{u}-\frac12 Q_{ijk} \hat{\nabla}^\w_k\hat{u}+\hat{u}\delta_{ij},$$ where $\hat{\nabla}^\w$ is the covariant derivative with respect to $\hat{g}$ on $\w$. See \cite{X2}. The anisotropic support function $\hat{u}$, viewed as functions on $\w$, satisfies
\begin{eqnarray*}
\p_t \hat{u}=\frac{1}{H_F}=\frac{\sigma_n}{\sigma_{n-1}}(U_{ij}),
\end{eqnarray*}
 \begin{eqnarray}\label{cc}
\p_t \tilde{\hat{u}}=\frac{1}{\tilde{H}_F}-\frac{\tilde{\hat{u}}}{n}=\frac{\sigma_n}{\sigma_{n-1}}(\tilde{U}_{ij})-\frac{\tilde{\hat{u}}}{n}.
\end{eqnarray}
Let $$\mathcal{U}(t):=\int_\w  \tilde{\hat{u}}(\cdot,t) d\mu_F,$$

\eqref{cc} and \eqref{Pconv} tells us \begin{eqnarray}\label{ccc}
\quad \left|\frac{d}{dt}\mathcal{U}\right|\leq  Ce^{-Ct}.
\end{eqnarray}

It follows from \eqref{ccc} that  there exists a constant $\mathcal{U}^*$ such that 
\begin{eqnarray}\label{ddd}
\|\mathcal{U}(t)-\mathcal{U}^*\|\leq  Ce^{-Ct}.
\end{eqnarray}

On the other hand, using \eqref{cc}, \eqref{ccc}, Lemma \ref{volume} and the Poincar\'e inequality, we deduce
\begin{eqnarray*}
&&\frac{d}{dt}\int_\w |\tilde{\hat{u}}(\cdot,t)-\mathcal{U}(t)|^2 d\mu_F
\\&=&\int_\w 2\tilde{\hat{u}}\left[\frac{\sigma_n}{\sigma_{n-1}}(\tilde{U}_{ij})-\frac{\tilde{\hat{u}}}{n}\right]-2\mathcal{U}(t)\frac{d}{dt}\mathcal{U} d\mu_F
\\&\leq &\int_\w \frac2n\tilde{\hat{u}}\left(\hat{\Delta}^\w\tilde{\hat{u}}-\frac12 Q_{iik} \hat{\nabla}^\w_k\tilde{\hat{u}}\right)d\mu_F+Ce^{-Ct}
\\&= &-\int_\w \frac2n|\hat{\nabla}^\w \tilde{\hat{u}}|_{\hat{g}_\w}^2 d\mu_F+Ce^{-Ct}
\\&\leq &-C\int_\w |\tilde{\hat{u}}(\cdot,t)-\mathcal{U}(t)|^2 d\mu_F+Ce^{-Ct}.
\end{eqnarray*}
Thus \begin{eqnarray}\label{eee}
\int_\w |\tilde{\hat{u}}(\cdot,t)-\mathcal{U}(t)|^2 d\mu_F\leq  Ce^{-Ct}.
\end{eqnarray}

Combining \eqref{ddd} and \eqref{eee}, and using the interpolation theorem, we see that 
\begin{eqnarray*}
\|\tilde{\hat{u}}(\cdot,t)-\mathcal{U}^*\|_{C^k(\w)} \leq  Ce^{-Ct} \hbox{ for any } k\ge 0.
\end{eqnarray*}

Therefore,  we proved that  $\tilde{\hat{u}}:\w\to\r$, as the anisotropic support function of $\tilde{X}$, converges  exponentially  to a constant in the $C^\infty$ topology.  Note from \eqref{relation u} that  $$\tilde{\hat{u}}(y, t)=\frac{\tilde{{u}}(x,t)}{F(x)}, \quad\hbox{ for } x\in \ss^n,\quad y=DF(x)\in \w. $$ Thus $\tilde{{u}}:\ss^n\to\r$, as the usual support function of $\tilde{X}$, converges exponentially  to $F:\ss^n\to\r$ in the $C^\infty$ topology.
Since a strictly convex hypersurface is uniquely determined by its support function as \eqref{ww}, we conclude that
$\tilde{X}$ converges exponentially fast  to a rescaling of $\w$ in the $C^\infty$ topology, without any correction by translations.
The proof of Theorem \ref{thm} is complete.

\

\section{A Minkowski type inequality}\label{sec5}

In this section we prove Theorem \ref{AF}.

Assume first $M$ is strictly $F$-mean convex,
Let $X(\cdot, t), t\in [0,\infty)$ be the solution of \eqref{iamcf} with $X(\cdot,0)=M$ and $\tilde{X}(\cdot,t)=e^{-\frac1nt}X(\cdot,t)$. Theorem \ref{thm} tells that $\tilde{X}(\cdot,t)$ converges smoothly to a rescaling of $\w$, say $\a_0 \w$.
We see from \eqref{aa} and \eqref{vv} that
\begin{eqnarray}\label{aaa}
\frac{d}{dt}\int_{\tilde{X}} d\tilde{\mu}_F=0.
\end{eqnarray}
\begin{eqnarray}\label{bbb}
\frac{d}{dt}\int_{\tilde{X}} \tilde{H}_Fd\tilde{\mu}_F\leq 0.
\end{eqnarray}
Therefore, using \eqref{aaa} and \eqref{bbb}, \begin{eqnarray*}
&&\int_X \tilde{H}_Fd\tilde{\mu}_F\geq \int_{\a_0 \w} H_F(\a_0 \w)d\mu_F
=\frac{n}{\a_0}\int_{\a_0 \w} d\mu_F
\\&=&n\left(\int_\w d\mu_F\right)^\frac1n\left(\int_{\a_0\w} d\mu_F\right)^{\frac{n-1}{n}}\\&=&n\left(\int_\w d\mu_F\right)^\frac1n\left(\int_{\tilde{X}} d\tilde{\mu}_F\right)^{\frac{n-1}{n}}.
\end{eqnarray*}
On the other hand, 
\begin{eqnarray*}
\int_\w d\mu_F=\int_\w F(\nu) d\mu_g=\int_{\ss^n} F(x)\det (A_F)d\mu_{\ss^n}=(n+1)\hbox{Vol}(L).
\end{eqnarray*}
Therefore, at $t=0$, we have
\begin{eqnarray*}
&&\frac1n\int_M H_FF(\nu)d\mu_g\geq \left((n+1)\hbox{Vol}(L)\right)^\frac1n\left(\int_M F(\nu)d\mu_g\right)^{\frac{n-1}{n}}.
\end{eqnarray*}
This is exactly \eqref{AFI} we desired. Equality holds if and only if equality in \eqref{vv} holds, whence $M$ is anisotropic umbilic, that is, $M$ is a rescaling and translation of $\w$.

For general $F$-mean convex hypersurface, inequality \eqref{AFI} follows from the approximation. The same argument in \cite{GL} shows an $F$-mean convex hypersurface which attains the equality must be strictly  $F$-mean convex hypersurface. Thus it must be a rescaling and translation of $\w$. The proof is complete.

\

\section{Discussion on general inverse anisotropic flows}\label{sec6}

By virtue of Gerhardt and Urbas' result and Guan-Li's result on the Alexandrov-Fenchel inequality, it is natural to consider 
\begin{eqnarray}\label{iacf}
\p_t X=\frac{1}{f(\k^F)}\nu_F,
\end{eqnarray}
for general positive speed function $f\in C^0(\overline{\Gamma}) \cap C^2({\Gamma})$, where $\Gamma$ is  some convex cone containing the positive cone. Assume $f$ satisfies the following conditions: 
\begin{eqnarray*}
&\hbox{(i)}& f \hbox{ is homogeneous of degree one on }\Gamma,\\
&\hbox{(ii)}& f\hbox{ is monotone, i.e. }\frac{\partial f}{\partial \l_i}>0\hbox{ on }\Gamma,\\
&\hbox{(iii)}& f\hbox{ is concave, i.e. } \frac{\partial^2 f}{\partial\l_i\partial \l_j}\leq 0 \hbox{ on }\Gamma,\\
&\hbox{(iv)}& f=0 \hbox{ on }\partial \Gamma.\\
&\hbox{(v)}& f(1,\cdots, 1)=1.
\end{eqnarray*}

We are able to show the estimate up to $C^1$ for \eqref{iacf} with $f$ satisfying (i)-(v) using our reformulation.
Denote by $f^{ij}=\frac{\p f}{\p \hat{h}_{ij}}.$

The $C^0$ estimate follows directly by the comparison principle as in Proposition \ref{C0}.

For the $C^1$ estimate, we still look at the evolution equation for $\tilde{\hat{u}}$ for $\tilde{X}=e^{-t}X$. By similar computation as in Proposition \ref{evolv}, we have
\begin{eqnarray*}
\p_t \tilde{\hat{u}}-\frac{1}{f^2}f^{ij}\left(\hat{\nabla}_i \hat{\nabla}_j \tilde{\hat{u}}+A_{pij}\hat{\nabla}^p \tilde{\hat{u}}\right)= \left(\frac{1}{f^2}f^{ij}\hat{h}_{ik}\hat{h}_{j}^k-1\right)\tilde{\hat{u}}\geq 0.
\end{eqnarray*}
The same argument as in Proposition \ref{C1} shows that the graph function has a  uniform $C^1$ bound. 

Unlike the case of the IAMCF, there is no quasilinear form for general $f$ and we have to estimate the $C^2$ directly. This is a quite delicate problem since the evolution equation  for either $h$ or $\hat{h}$ behaves messy due to the complexity of the anisotropic Gauss-Codazzi type equation \eqref{Gausseq} and \eqref{Codazzi}.
In \cite{X3}, we are  able to prove the $C^2$ estimate in some special cases when the initial hypersurface is assumed to be convex.
It is quite interesting to study such inverse type anisotropic flow, especially the case for $f=\frac{\sigma_{k+1}}{\sigma_{k}}$, in view of the Minkowski inequality \eqref{AF0} for general $i<j$.

\

\noindent{\bf Acknowledgments}.
This paper has been done while I undertook a CRC postdoc fellowship at the Department of Mathematics at McGill University.  I would like to thank Prof. Pengfei Guan  for stimulating discussion and suggestion on this subject. I would also like to thank Prof. Ben Andrews for his interest in this work. I am grateful to the  anonymous referee for his/her critical reading and valuable comments.

\

\end{document}